\def\@email#1#2{%
 \endgroup
 \patchcmd{\titleblock@produce}
  {\frontmatter@RRAPformat}
  {\frontmatter@RRAPformat{\produce@RRAP{*#1\href{mailto:#2}{#2}}}\frontmatter@RRAPformat}
  {}{}
}%
\newtheorem{theorem}{Theorem}[section]
\newtheorem*{definition}{Definition}
\newtheorem{corollary}{Corollary}[theorem]
\newtheorem{proposition}{Proposition}[theorem]
\newtheorem{lemma}{Lemma}[theorem]
\begin{document}

\preprint{AIP/123-QED}

\title[Smooth Functional Calculus and Spectral Theorem in Banach Spaces]{Smooth Functional Calculus and Spectral Theorem in Banach Spaces}
\author{{Luis A.} {Cede\~no-P\'erez} and Hernando Quevedo}

\email{luisacp@ciencias.unam.mx,quevedo@nucleares.unam.mx}

\affiliation{ Instituto de Ciencias Nucleares, Universidad Nacional Aut\'onoma de M\'exico, AP  70543, Mexico City, Mexico }
\affiliation{ Dipartimento di Fisica and Icra, Universit\`a di Roma “La Sapienza”, Roma, Italy }
\affiliation{ Al-Farabi Kazakh National University, Al-Farabi av. 71, 050040 Almaty, Kazakhstan}

\date{\today}

\begin{abstract}
The notion of projection families generalizes the classical notions of vector- and operator-valued measures. We show that projection families are general enough to extend the Spectral Theorem to Banach algebras and operators between Banach spaces. To this end, we first develop a Smooth Functional Calculus in Banach algebras using the Cauchy-Pompeiu formula, which is further extended to a Continuous Functional Calculus. We also show that these theorems are proper generalizations of the usual result for operators between Hilbert spaces.
\end{abstract}

\keywords{spectral theorem in Banach spaces, operator-valued measures, vector integration, vector measures, quantum information.}

\maketitle

\tableofcontents

\section{Introduction}

In a previous work \cite{ProjectionFamilies}, we defined a new kind of measures called \textbf{projection families} and developed their theory of integration. In the present article, we show that the properties of projection families permit us to generalize the Spectral Theorem, both to Banach algebras and operators between Banach spaces.

Let $X$ be a Banach algebra, and $x\in X$. If $f$ is an holomorphic function in a neighborhood $\Omega$ of $\sigma(x)$, we define $f(x)$ as the element of $X$ given by
\begin{equation*}
    f(x) = \frac{1}{2\pi i}\int_{\gamma}f(\lambda)\;R_{x}(\lambda)\;d\lambda,
\end{equation*}
where $\gamma$ is a simple closed curve $\Omega$ that surrounds $\sigma(x)$ and $R_{x}(\lambda)$ is the resolvent function of $x$. This is known as the \textbf{Holomorphic Functional Calculus}. Despite being defined for every element of the Banach algebra, this functional calculus has the defect that the class of holomorphic functions is too small.

The Holomorphic Functional Calculus is clearly based on the Cauchy Integral Formula. If we wish to extend the Holomorphic Functional Calculus to the continuous functions in $\sigma(x)$, it is natural to first look for a generalization of the Cauchy Integral Formula to use as a starting point. We use the less known \textbf{Cauchy-Pompeiu Formula}, which generalizes the Cauchy Integral Formula to smooth functions. This formula states that if $\Omega\subset\mathbb{C}$ is an open set with compact closure and smooth boundary and $f$ is smooth in $\overline{\Omega}$ then
\begin{equation*}
    f(\lambda) = \frac{1}{2\pi i}\int_{\partial\omega}\frac{f(z)}{z-\lambda}\;dz - \frac{1}{\pi}\int_{\omega}\frac{\partial_{z^{\ast}}f(z)}{z-\lambda}\;dx dy
\end{equation*}
for any $\lambda\in\Omega$ (see \cite{Hormander}). Based on the Cauchy-Pompeiu Formula, the Smooth Functional Calculus should be defined as
\begin{equation*}
    f(x) = \frac{1}{2\pi i}\int_{\gamma}f(\lambda)R_{x}(\lambda)\;d\gamma(\lambda) - \frac{1}{\pi}\int_{int\;\gamma\setminus \sigma(x)}\partial_{z^{\ast}}f(\lambda)R_{x}(\lambda)\;d\ell(\lambda).
\end{equation*}
The first integral exists by continuity and compactness, however, the second one is much more complicated since the resolvent function $R_{x}(\lambda)$ diverges close to the spectrum. The first integral ignores this as the curve $\gamma$ is never in the spectrum, but this cannot be avoided in the second integral since the integration is in the plane. The second integral is, essentially, a singular integral and as such may not define a continuous operator. Two things are needed to deal with this singular integral. Firstly, we require $x$ to have additional properties in terms of its resolvent $R_{x}$. This leads to the notion of \textbf{regular elements}, which are precisely those singular integrals with a continuous behavior. The second is to allow $f(x)$ to be an element of the larger space $X^{\ast\ast}$ instead of $X$. The previous formula gives the definition of the \textbf{Smooth Functional Calculus} when both conditions are met.

Given that smooth functions are dense in the continuous functions, one would expect the extension to the continuous case to be rather straightforward. This is not the case since the Smooth Functional Calculus is easily seen to be continuous with respect to the norm of $C^{1}$, but the continuity required to extend by density is with respect to the uniform norm. This difficulty comes from, essentially, the appearance of the first derivatives in the Smooth Functional Calculus. To deal with this it is necessary to manipulate the definition of the Smooth Functional Calculus in such a way that the resulting expression does not have a dependence on first derivatives of the function. Once this is done, the continuous extension to the space of continuous functions will be possible, resulting in the \textbf{Continuous Functional Calculus}.

The existence of the Continuous Functional Calculus will allow us to prove two versions of the Spectral Theorem, the first for Banach algebras and the second for operators between Banach spaces. The Banach algebra version is clearly valid in the Banach algebra $B(X)$, however, it turns out to be too restrictive, thus the second version is developed under less restrictive hypotheses which we show are enough for operators. The central idea of our constructions is to study the function
\begin{equation*}
    (f,\Lambda,x) \longmapsto \Lambda(f(T)(x)),
\end{equation*}
for a fixed operator $T\in B(X)$, and study its continuity properties. For fixed $\Lambda$ and $x$, the continuity with respect to $f$ leads to the existence of a measure $\mu^{T}_{\Lambda,x}$ which determines the functional. The continuity with respect to $\Lambda$ and $x$ implies that the family of measures
\begin{equation*}
    \mu^{T} = \{\mu^{T}_{\Lambda,x}\;|\;\Lambda\in X^{\ast},\;x\in X\}
\end{equation*}
is an operator projection family. This allows us to extend the Continuous Functional Calculus to the space of integrable functions $L^{1}(\mu^{T})$. Finally, we show that this theorem is a strict generalization of the usual Spectral Theorem for operators between Hilbert spaces.

The first section of this work is to prove the Cauchy-Pompeiu Formula and other results in geometric integration that will be useful throughout the work. In the second section, we state the basic results on vector integration and projection families, as well as the analytical tools we will require. In the third section, we develop the Smooth and Continuous Functional Calculi executing the program described in this section. Finally, in the last section, we prove the Spectral Theorem in the contexts of Banach algebras and operators between Banach spaces.

\section{Geometric Integration}

\subsection{Cauchy-Pompeiu Formula}

The complex plane $\mathbb{C}$ has two coordinate differential forms $dx$ and $dy$, which induce linearly independent differential forms
\begin{equation*}
    dz = dx + i\;dy
\end{equation*}
and
\begin{equation*}
    dz^{\ast} = dx - i\;dy,
\end{equation*}
and thus span every $1$-form in $\mathbb{C}$. A simple computation shows that
\begin{equation}\label{EqLebesgueNoLebesgue}
    dz \wedge dz^{\ast} = -2i\;dx\wedge dy,
\end{equation}
and every $2$-form is obtained by multiplying this form by a scalar function. These differential forms have associated tangent vectors
\begin{equation*}
    2\partial_{z} = \partial_{x} - i\partial_{y}
\end{equation*}
and
\begin{equation*}
    2\partial_{z^{\ast}} = \partial_{x} + i\partial_{y},
\end{equation*}
given by the conditions $dz(\partial_{z}) = 1$, $dz^{\ast}(\partial_{z^{\ast}}) = 1$ and $dz(\partial_{z^{\ast}}) = dz^{\ast}(\partial_{z}) = 0$.

If $u$ is a smooth function defined in an open set $\omega$ with smooth boundary $\partial\omega$, then the complex integral along $\partial\omega$ coincides with the integral of the $1$-form $u\;dz$ along $\partial\omega$. Furthermore, the Stokes Theorem implies that
\begin{align}\label{EqStokesCompleja}
    \notag \int_{\partial\omega}u\;dz &= \int_{\omega}d(u\;dz)\\
    \notag &= \int_{\omega}du\wedge dz\\
    \notag &= \int_{\omega}(\partial_{z}u\;dz + \partial_{z^{\ast}}u\;dz^{\ast})\wedge dz\\
    &= \int_{\omega}\partial_{z^{\ast}}u\;dz^{\ast}\wedge dz.
\end{align}
It is convenient to note that the function
\begin{equation*}
    \lambda \longmapsto \frac{1}{\lambda}
\end{equation*}
is integrable near the origin, since the dimension is two. This simplifies the convergence of certain integrals.

\begin{theorem}[Cauchy-Pompeiu Formula]\label{CauchyPompeiu}
Let $\Omega\subset\mathbb{C}$ be an open set with compact closure, $u\in C^{\infty}(\Omega)$ and $\omega\subset\Omega$ an open set with smooth boundary such that $\partial\omega\subset\Omega$. The formula
\begin{align*}
    u(\lambda) &= \frac{1}{2\pi i}\int_{\partial\omega}\frac{u}{z-\lambda}\;dz + \frac{1}{2\pi i}\int_{\omega}\frac{\partial_{z^{\ast}}u}{z-\lambda} dz^{\ast}\wedge dz\\
    &= \frac{1}{2\pi i}\int_{\partial\omega}\frac{u}{z-\lambda}\;dz - \frac{1}{\pi}\int_{\omega}\frac{\partial_{z^{\ast}}u}{z-\lambda}\;dx\wedge dy
\end{align*}
is valid for any $\lambda\in\omega$.
\end{theorem}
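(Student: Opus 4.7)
The plan is to adapt the classical proof of the Cauchy Integral Formula by excising a small disk around the singularity at $\lambda$, applying the Stokes identity (\ref{EqStokesCompleja}) on the punctured domain, and passing to the limit. The key point that makes this work is that $1/(z-\lambda)$ is locally integrable on $\mathbb{C}$ (as noted right before the theorem statement), so the area integral on the right-hand side is well-defined even though the integrand is singular at $\lambda$.

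First I would fix $\lambda\in\omega$ and choose $\varepsilon>0$ small enough that $\overline{B_{\varepsilon}(\lambda)}\subset\omega$. Set $\omega_{\varepsilon}=\omega\setminus\overline{B_{\varepsilon}(\lambda)}$; its oriented boundary is $\partial\omega$ together with $\partial B_{\varepsilon}(\lambda)$ traversed clockwise. On $\omega_{\varepsilon}$, the function $z\mapsto u(z)/(z-\lambda)$ is smooth and, because $1/(z-\lambda)$ is holomorphic there,
\begin{equation*}
\partial_{z^{\ast}}\!\left(\frac{u}{z-\lambda}\right)=\frac{\partial_{z^{\ast}}u}{z-\lambda}.
\end{equation*}
Applying (\ref{EqStokesCompleja}) to this function on $\omega_{\varepsilon}$ gives
\begin{equation*}
\int_{\partial\omega}\frac{u}{z-\lambda}\,dz\;-\;\int_{\partial B_{\varepsilon}(\lambda)}\frac{u}{z-\lambda}\,dz\;=\;\int_{\omega_{\varepsilon}}\frac{\partial_{z^{\ast}}u}{z-\lambda}\,dz^{\ast}\wedge dz,
\end{equation*}
where $\partial B_{\varepsilon}(\lambda)$ now carries the standard counterclockwise orientation.

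Next I would let $\varepsilon\to 0$. Parametrising $\partial B_{\varepsilon}(\lambda)$ by $z=\lambda+\varepsilon e^{i\theta}$, one has $dz=i\varepsilon e^{i\theta}\,d\theta$, so the circle integral simplifies to $i\int_{0}^{2\pi}u(\lambda+\varepsilon e^{i\theta})\,d\theta$, which tends to $2\pi i\,u(\lambda)$ by continuity of $u$. For the area integral, the integrand is bounded in modulus by $\|\partial_{z^{\ast}}u\|_{\infty,\overline{\omega}}\cdot|z-\lambda|^{-1}$, which is integrable on $\omega$ by the remark recalled above; dominated convergence then yields the limit $\int_{\omega}(\partial_{z^{\ast}}u)(z-\lambda)^{-1}\,dz^{\ast}\wedge dz$. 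Dividing through by $2\pi i$ produces the first stated equality. The second equality is immediate from (\ref{EqLebesgueNoLebesgue}), which rewrites $dz^{\ast}\wedge dz$ in terms of the Lebesgue form $dx\wedge dy$.

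The only step that requires real care is the limit argument: one must justify exchanging the limit with the area integral (dominated convergence, using the two-dimensional integrability of $|z-\lambda|^{-1}$ together with the compactness of $\overline{\omega}$ and smoothness of $u$), and one must track orientations correctly when writing $\partial\omega_{\varepsilon}$ as $\partial\omega$ minus the small circle. Everything else is a direct application of the complex Stokes formula already established in (\ref{EqStokesCompleja}).
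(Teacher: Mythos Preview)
Your proof is correct and follows essentially the same route as the paper: excise $B_{\varepsilon}(\lambda)$, apply the complex Stokes identity (\ref{EqStokesCompleja}) to $u/(z-\lambda)$ on the punctured region, evaluate the circle integral via the parametrisation $z=\lambda+\varepsilon e^{i\theta}$, and pass to the limit. The only cosmetic difference is that the paper invokes the Monotone Convergence Theorem for the area integral whereas you use dominated convergence with the integrable majorant $\|\partial_{z^{\ast}}u\|_{\infty}\,|z-\lambda|^{-1}$; your justification is in fact the cleaner one for a complex-valued integrand.
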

\begin{proof}
Since $\lambda\in\omega$ and $\omega$ is open, there exists $\epsilon > 0$ such that $B_{\epsilon}(\lambda)\subset\omega$. We define
\begin{equation*}
    \omega_{\epsilon} = \omega\setminus B_{\epsilon}(\lambda).
\end{equation*}
The function
\begin{equation*}
    \frac{u}{z-\lambda}
\end{equation*}
is $C^{\infty}$ in $\omega_{\epsilon}$, thus equation (\ref{EqStokesCompleja}) implies that
\begin{align}\label{EqCauchyPompeiu1}
    \notag\int_{\omega_{\epsilon}}\frac{\partial_{z^{\ast}}u}{z - \lambda}\;dz^{\ast}\wedge dz &= \int_{\partial\omega_{\epsilon}}\frac{u}{z-\lambda}\;dz\\
    &= \int_{\partial\omega}\frac{u}{z-\lambda}\;dz - \int_{\partial B_{\epsilon}(\lambda)}\frac{u}{z-\lambda}\;dz.
\end{align}
Parametrize $\partial B_{\epsilon}(\lambda)$ as $\lambda + \epsilon e^{it}$ with $t\in [0,2\pi]$, hence
\begin{equation*}
    dz = i\epsilon e^{it}\;dt.
\end{equation*}
It follows that
\begin{align*}
    \int_{\partial B_{\epsilon}(\lambda)}\frac{u}{z-\lambda}\;dz &= i\int_{0}^{2\pi}u(\lambda + \epsilon e^{it})\;dt.
\end{align*}
Substituting in equation (\ref{EqCauchyPompeiu1}) we find that
\begin{equation*}
    \int_{\omega_{\epsilon}}\frac{\partial_{z^{\ast}}u}{z - \lambda}\;dz^{\ast}\wedge dz = \int_{\partial\omega}\frac{u}{z-\lambda}\;dz - i\int_{0}^{2\pi}u(\lambda + \epsilon e^{it})\;dt.
\end{equation*}
The Monotone Convergence Theorem on the left-hand side and the Dominated Convergence Theorem on the right-hand side imply that the limit as $\epsilon\to 0$ is
\begin{equation*}
    \int_{\omega}\frac{\partial_{z^{\ast}}u}{z - \lambda}\;dz^{\ast}\wedge dz = \int_{\partial\omega}\frac{u}{z-\lambda}\;dz - 2\pi i\;u(\lambda).
\end{equation*}
The result follows from rearranging terms.
\end{proof}

The language of differential topology is useful to deduce the Cauchy-Pompeiu Formula, however, in the rest of the work the language of measure theory will be more useful.

\begin{corollary}
Under the hypotheses of the Cauchy-Pompeiu Formula, if $\gamma = \partial\omega$, $d\gamma$ is the measure induced by $\gamma$ and $\ell$ is the Lebesgue measure then
\begin{equation*}
    u(\lambda) = \frac{1}{2\pi i}\int_{\partial\omega}\frac{u}{z-\lambda}\;d\gamma - \frac{1}{\pi}\int_{\omega}\frac{\partial_{z^{\ast}}u}{z-\lambda}\;d\ell.
\end{equation*}
\end{corollary}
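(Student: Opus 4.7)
The plan is to recognize the corollary as a purely notational rewriting of the second equality in Theorem~\ref{CauchyPompeiu}. Both sides of the identity to be proved already appear in that theorem, but expressed in the language of differential forms; my only task is to match each differential-form integral with the corresponding measure-theoretic one, and then substitute.

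For the boundary term, I would appeal to the standard identification between integration of a $1$-form along an oriented curve and integration against the induced complex line measure. Concretely, if $\gamma:[a,b]\to\mathbb{C}$ is a positively oriented parametrization of $\partial\omega$, then
\begin{equation*}
    \int_{\partial\omega}\frac{u}{z-\lambda}\,dz = \int_{a}^{b}\frac{u(\gamma(t))}{\gamma(t)-\lambda}\,\gamma'(t)\,dt,
\end{equation*}
and the right-hand side is, by definition, the integral of $\frac{u}{z-\lambda}$ against the complex line measure $d\gamma$ carried by $\gamma$. Hence the first term of the formula in Theorem~\ref{CauchyPompeiu} is unchanged except for a change of notation from $dz$ to $d\gamma$.

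For the area term, I would use that the $2$-form $dx\wedge dy$ on $\omega$ equipped with its standard orientation represents precisely planar Lebesgue measure, in the sense that $\int_{\omega}f\,dx\wedge dy = \int_{\omega}f\,d\ell$ for every $\ell$-integrable $f$. Applied to $f = \frac{\partial_{z^{\ast}}u}{z-\lambda}$, which is integrable near $\lambda$ thanks to the observation recorded just before Theorem~\ref{CauchyPompeiu} that $1/\lambda$ is locally integrable in the plane, this converts the second integral in the theorem into $\int_{\omega}\frac{\partial_{z^{\ast}}u}{z-\lambda}\,d\ell$ with the same coefficient $-\frac{1}{\pi}$. Inserting the two identifications into the second line of the formula stated in Theorem~\ref{CauchyPompeiu} produces the corollary. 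There is no substantive obstacle: the only point that requires attention is that the orientations used in the definitions of $d\gamma$ and $d\ell$ agree with the orientation conventions already fixed in the proof of Theorem~\ref{CauchyPompeiu}, which ensures that no spurious sign appears in either term.
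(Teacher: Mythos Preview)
Your proposal is correct and matches the paper's own approach: the corollary is just the second line of Theorem~\ref{CauchyPompeiu} rewritten with $d\gamma$ in place of $dz$ and $d\ell$ in place of $dx\wedge dy$, which is exactly what the paper records (its one-line proof cites Theorem~\ref{CauchyPompeiu} and equation~(\ref{EqLebesgueNoLebesgue})). Your version simply spells out the two identifications in more detail.
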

\begin{proof}
It follows from the Cauchy-Pompeiu Formula and equation (\ref{EqLebesgueNoLebesgue}).
\end{proof}

Just as for the Cauchy Integral Formula and analytic functions, the Cauchy-Pompeiu Formula is equivalent to other statements on the nullity of integrals and the independence of trajectories.

\begin{theorem}[Cauchy-Pompeiu]
Let $\Omega$ be an open subset of $\mathbb{C}$ with compact closure and $f$ a smooth function in $\Omega$. The following are satisfied:
\begin{enumerate}
    \item For any simple closed curve $\gamma$ in $\Omega$ we have that
    \begin{equation*}
        \frac{1}{2\pi i}\int_{\gamma} f\;d\gamma - \frac{1}{\pi}\int_{int\;\gamma}\partial_{z^{\ast}}f\;d\ell = 0,
    \end{equation*}
    where $int\;\gamma$ is the geometric interior of $\gamma$ and not the topological interior.
    \item If $\gamma_{1}$ and $\gamma_{2}$ are closed curves homotopic in $\Omega$ then
    \begin{equation*}
        \frac{1}{2\pi i}\int_{\gamma_{1}} f\;d\gamma - \frac{1}{\pi}\int_{int\;\gamma_{1}}\partial_{z^{\ast}}f\;d\ell = \frac{1}{2\pi i}\int_{\gamma_{2}} f\;d\gamma - \frac{1}{\pi}\int_{int\;\gamma_{2}}\partial_{z^{\ast}}f\;d\ell.
    \end{equation*}
\end{enumerate}
\end{theorem}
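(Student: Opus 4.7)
The plan is to recognize both statements as immediate consequences of the complex Stokes identity (\ref{EqStokesCompleja}), applied to the $1$-form $f\,dz$, together with the conversion (\ref{EqLebesgueNoLebesgue}) between $dz\wedge dz^{\ast}$ and Lebesgue measure. For part (1), I would take $u = f$ and $\omega = int\;\gamma$ in (\ref{EqStokesCompleja}), obtaining
\begin{equation*}
    \int_{\gamma} f\;dz \;=\; \int_{\omega}\partial_{z^{\ast}}f\;dz^{\ast}\wedge dz.
\end{equation*}
Since $dz^{\ast}\wedge dz = 2i\,dx\wedge dy$ by (\ref{EqLebesgueNoLebesgue}), the right-hand side equals $2i\int_{\omega}\partial_{z^{\ast}}f\,d\ell$. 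Dividing by $2\pi i$ gives
\begin{equation*}
    \frac{1}{2\pi i}\int_{\gamma} f\;d\gamma \;=\; \frac{1}{\pi}\int_{int\;\gamma}\partial_{z^{\ast}}f\;d\ell,
\end{equation*}
which is exactly the stated identity. This is the same Stokes computation that drove the proof of the Cauchy-Pompeiu formula itself, but without the singular factor $\tfrac{1}{z-\lambda}$, so no limiting argument with small disks $B_{\epsilon}(\lambda)$ is required.

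For part (2), the cleanest route is to observe that each of the two bracketed combinations vanishes independently by part (1), so their equality is automatic; the homotopy hypothesis is not actually needed in this formulation, since what matters is only that each $\gamma_{i}$ is a closed curve contained in $\Omega$ together with its geometric interior. If for some reason one wished to bypass part (1), an alternative plan would be to use the homotopy $H:[0,1]\times S^{1}\to \Omega$ as a $2$-chain with $\partial H = \gamma_{1} - \gamma_{2}$ and apply Stokes' theorem directly on $H$:
\begin{equation*}
    \int_{\gamma_{1}} f\;dz - \int_{\gamma_{2}} f\;dz \;=\; \int_{H}\partial_{z^{\ast}}f\;dz^{\ast}\wedge dz,
\end{equation*}
then translate the right-hand side into a Lebesgue integral over the symmetric difference of the geometric interiors (weighted by winding numbers) via (\ref{EqLebesgueNoLebesgue}).

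I expect the main obstacle to be not algebraic but geometric and measure-theoretic: making precise the notion of \emph{geometric interior} of a simple closed curve (which tacitly invokes the Jordan curve theorem), checking that $\gamma$ is regular enough for Stokes' theorem in the form (\ref{EqStokesCompleja}), and, if one pursues the homotopy-based approach for part (2), correctly accounting for winding-number multiplicities when the homotopy image overlaps itself. Once these technicalities are fixed in the same manner as in the hypotheses of Theorem \ref{CauchyPompeiu}, both statements reduce to the one-line Stokes computation above.
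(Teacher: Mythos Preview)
Your argument is correct but follows a different route from the paper's. For part~(1) the paper applies the Cauchy--Pompeiu Formula (Theorem~\ref{CauchyPompeiu}) to the auxiliary function $g(z)=f(z)(z-\lambda)$ with $\lambda\in int\;\gamma$: since $g(\lambda)=0$, $\tfrac{g(z)}{z-\lambda}=f(z)$, and $\partial_{z^{\ast}}g=(z-\lambda)\,\partial_{z^{\ast}}f$ (because $z-\lambda$ is holomorphic), the formula collapses exactly to the claimed identity. You instead bypass Theorem~\ref{CauchyPompeiu} altogether and go straight to the Stokes identity~(\ref{EqStokesCompleja}) with $u=f$ and $\omega=int\;\gamma$, which is more elementary since no singular kernel and no excision of a small disk are involved. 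The paper's approach has the expository advantage of showing that this result is a formal corollary of the Cauchy--Pompeiu Formula itself; your approach makes clear that it is really just Stokes' theorem in disguise and needs nothing else. For part~(2) both arguments are at the same level of informality: the paper appeals to ``an appropriate homology class,'' while you note (correctly) that once part~(1) holds each side vanishes separately, with the caveat that part~(1) is stated only for \emph{simple} closed curves whereas part~(2) drops that adjective---a gap the paper also leaves open and that your alternative homotopy-chain argument is designed to cover.
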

\begin{proof}
For the first statement, we apply the Cauchy-Pompeiu Formula to the smooth function $f(z)(z-\lambda)$ with $\lambda\in int\;\gamma$. For the second statement, we apply the first one to an appropriate homology class.
\end{proof}

\subsection{Coarea Formula}

The last fact from geometric integration that we require is the Coarea Formula, which relates the integrals on sublevel sets and integrals on level sets (see \cite{Chavel}). This formula will allow us to study the singular behavior of the resolvent function $R_{x}$, which in turn is related to the singular behavior of reciprocals of distance-to-a-set functions.

\begin{theorem}[Coarea Formula]\label{FórmulaCoárea}
Let $\Omega\subset\mathbb{C}$ be an open set and $H\colon \Omega \to [0,1]$ a $C^{\infty}$ function. For each $t\in [0,1]$ define
\begin{equation*}
    \Omega_{t} = H^{-1}([0,t))
\end{equation*}
and
\begin{equation*}
    \gamma_{t} = H^{-1}(\{t\}).
\end{equation*}
If $\Phi_{t}$ is the flow of $\nabla H$ then
\begin{equation*}
    d\ell = \frac{1}{|\nabla H|} d\gamma_{t}\wedge d_{t}\Phi,
\end{equation*}
thus, if $f\colon \Omega\to\mathbb{R}$ is integrable then
\begin{equation*}
    \int_{\Omega}f\;d\ell = \int_{0}^{1}\int_{\gamma_{t}}f\frac{1}{|\nabla H|}\;d\gamma_{t}\;dt.
\end{equation*}
\end{theorem}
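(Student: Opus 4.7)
The plan is to reduce the formula to a change-of-variables calculation using the gradient flow $\Phi_\tau$ itself as a coordinate system. Roughly, $\Phi_\tau$ should give a diffeomorphism between a neighborhood of a regular level set $\gamma_{t_0}$ and a product $\gamma_{t_0}\times (-\delta,\delta)$, and under this parametrization the Lebesgue measure factors as a product measure with the explicit Jacobian coming from $\nabla H$. The integral form then follows from Fubini.

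First, I would deal with the critical set $C=\{p\in\Omega\colon\nabla H(p)=0\}$. By Sard's theorem, $H(C)$ has Lebesgue measure zero in $[0,1]$, so the right-hand side of the coarea identity is unaffected by throwing it out, and on $\Omega\setminus H^{-1}(H(C))$ the gradient is nowhere zero. On the latter set I restrict attention and work locally near a regular point $p_0$ with $H(p_0)=t_0$. Using the implicit function theorem, $\gamma_{t_0}$ is locally a smooth $1$-manifold, and the map
\begin{equation*}
    \Psi\colon \gamma_{t_0}\times(-\delta,\delta)\longrightarrow\Omega,\qquad \Psi(q,\tau)=\Phi_\tau(q),
\end{equation*}
is a local diffeomorphism onto its image because the tangent directions to $\gamma_{t_0}$ at $q$ span $(\nabla H(q))^{\perp}$ while $\partial_\tau \Psi(q,0)=\nabla H(q)$ is orthogonal to them and nonzero. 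Computing the Jacobian determinant of $\Psi$ at $\tau=0$ in an orthonormal frame adapted to this decomposition gives $|\nabla H(q)|$; by the smoothness of the flow the same factor (with $H(\Phi_\tau(q))$ replacing $t_0$) governs the Jacobian at nonzero $\tau$. Hence in $(q,\tau)$-coordinates
\begin{equation*}
    d\ell = |\nabla H|\,d\gamma_{t_0}(q)\wedge d\tau.
\end{equation*}

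Next I would reparametrize flow time $\tau$ by level value $t$. Along a flow line, $\frac{d}{d\tau}H(\Phi_\tau(q))=\langle\nabla H,\nabla H\rangle=|\nabla H|^2$, so $d\tau=|\nabla H|^{-2}\,dt$, and under this substitution $\Psi(q,\tau)\in\gamma_t$. Substituting into the previous display,
\begin{equation*}
    d\ell=|\nabla H|\cdot\frac{1}{|\nabla H|^2}\,d\gamma_t\wedge dt=\frac{1}{|\nabla H|}\,d\gamma_t\wedge dt,
\end{equation*}
which is the first identity of the theorem once $d_t\Phi$ is read as the pushforward of $dt$ along the flow. The integral formula then follows from Fubini applied to the product measure on $\gamma_{t_0}\times(-\delta,\delta)$, after patching the local parametrizations via a partition of unity subordinated to a cover of $\Omega\setminus H^{-1}(H(C))$ by flow boxes.

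The main obstacle I anticipate is the global patching: the flow $\Phi_\tau$ need not send a single level set onto every other one (orbits may leave $\Omega$, level sets may have multiple components), so the diffeomorphism $\Psi$ is genuinely local and one must argue carefully that the local Jacobian identity aggregates correctly. A partition of unity on $\Omega\setminus H^{-1}(H(C))$, together with the fact that each level set $\gamma_t$ is reached by integrating $\nabla H/|\nabla H|^2$ from any connected component it meets, handles this; the critical part contributes zero to both sides by Sard.
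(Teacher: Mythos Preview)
Your flow-box approach is sound in outline and reaches the right conclusion, but it is more elaborate than the paper's argument and contains one incorrect intermediate claim. The paper does not construct the diffeomorphism $\Psi$ or track a Jacobian along flow lines at all: it simply fixes a regular value $t$, takes the orthonormal frame $(\dot\gamma_t,\nabla H/|\nabla H|)$ at a point of $\gamma_t$, and checks that the $2$-form $\frac{1}{|\nabla H|}\,d\gamma_t\wedge d_t\Phi$ evaluates to $1$ on this frame, using only that $d_t\Phi(\dot\gamma_t)=0$ and $|d_t\Phi|=|\nabla H|$. Since a top-degree form is determined by its value on one basis, this yields the identity pointwise; Sard disposes of the critical set, and the integral formula follows.

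The specific problem in your argument is the sentence ``by the smoothness of the flow the same factor (with $H(\Phi_\tau(q))$ replacing $t_0$) governs the Jacobian at nonzero $\tau$.'' This is false in general: $D\Phi_\tau$ need not carry the unit tangent $\dot\gamma_{t_0}$ to a unit vector, nor to a vector tangent to the level set through $\Phi_\tau(q)$, because points on $\gamma_{t_0}$ flow at different speeds and $\Phi_\tau(\gamma_{t_0})$ is typically not a level set. For instance, with $H(x,y)=x+y^2/2$ one computes that the Jacobian of $\Psi$ (in arc-length on $\gamma_0$ and flow time $\tau$) is $e^\tau\sqrt{1+q^2}$, whereas $|\nabla H(\Psi(q,\tau))|=\sqrt{1+q^2e^{2\tau}}$. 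The same issue reappears when you silently replace $d\gamma_{t_0}$ by $d\gamma_t$ after reparametrizing. None of this is fatal: since the form identity is pointwise, it suffices to verify it at $\tau=0$ for each choice of base level $t_0$, where your computation is correct and the distinction between $d\gamma_{t_0}$ and $d\gamma_t$ disappears. If you drop the nonzero-$\tau$ claim and say this explicitly, your argument collapses to the paper's, phrased through a local chart rather than a frame, and the partition-of-unity patching becomes unnecessary.
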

\begin{proof}
We first note that by Sard's Theorem, the set of critical values of $H$ has zero Lebesgue measure; thus, the curves $\gamma_{t}$ are smooth except for a null set. The Chain Rule implies that the differential form $d_{t}\Phi$ satisfies the equations
\begin{equation*}
    d_{t}\Phi(\dot{\gamma_{t}}) = 0
\end{equation*}
and
\begin{equation*}
    |d_{t}\Phi| = |\nabla H|,
\end{equation*}
hence if $\gamma_{t}$ is parametrized by arc-length and $\vec{v}$ is a vector parallel to $\nabla H$ then
\begin{align*}
    \frac{1}{|\nabla H|} d_{t}\Phi\wedge d\gamma_{t}(v,\dot{\gamma}_{t}) &= \frac{1}{|\nabla H|} d_{t}\Phi(v) d\gamma_{t}(\dot{\gamma}_{t})\\
    &= 1,
\end{align*}
thus this is the volume form of the plane. The result follows from this.
\end{proof}

Let $\sigma$ be a closed subset of the plane. Recall that the function
\begin{equation*}
    d(\lambda,\sigma) = \inf\{|\lambda - z|\;|\;z\in\sigma\}
\end{equation*}
vanishes only if $\lambda\in\sigma$ and is Lipschitz, thus the Radamacher Theorem implies that it is differentiable almost everywhere. Furthermore, these functions are characterized by the condition
\begin{equation*}
    |\nabla H| = 1.
\end{equation*}
We now use the Coarea formula to study the integrability of the reciprocals of this kind of functions.

\begin{corollary}\label{IntegrabilidadDistancia}
Let $\sigma\subset\mathbb{C}$ be a compact set and $\Omega$ a neighborhood of $\sigma$. If $\ell(\sigma) = 0$ then
\begin{equation*}
    \lambda \longmapsto \frac{1}{d(\lambda,\sigma)}
\end{equation*}
is integrable in $\Omega\setminus\sigma$.
\end{corollary}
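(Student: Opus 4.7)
\textit{Plan.} The natural attack is the Coarea Formula (Theorem~\ref{FórmulaCoárea}) applied to $H(\lambda)=d(\lambda,\sigma)$. As noted just before the statement, $H$ is Lipschitz, differentiable a.e.\ by Rademacher's Theorem, and characterized by $|\nabla H|=1$ on its domain of differentiability, while vanishing exactly on $\sigma$. Because $1/d(\lambda,\sigma)$ is already bounded on any set separated from $\sigma$, the problem is purely local near $\sigma$; I would first reduce to the case where $\Omega$ has compact closure by shrinking $\Omega$ to a bounded neighborhood of $\sigma$ (the tail $\Omega\setminus\overline{\Omega'}$ contributes a bounded integrand on a set of finite measure).

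With $D=\sup_{\Omega}H$ and $\gamma_{t}=H^{-1}(\{t\})$, the Coarea Formula, after rescaling $H$ to $[0,1]$, collapses the two-dimensional integral into a one-dimensional one:
\begin{equation*}
    \int_{\Omega\setminus\sigma}\frac{1}{d(\lambda,\sigma)}\,d\ell \;=\; \int_{0}^{D}\frac{\ell_{1}(\gamma_{t})}{t}\,dt,
\end{equation*}
where $\ell_{1}(\gamma_{t})$ is the arc-length measure of $\gamma_{t}$; by Sard's Theorem this is a smooth curve for almost every $t$, so the right-hand side is unambiguous as an integral.

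The hypothesis $\ell(\sigma)=0$ enters through the distribution function $A(t)=\ell(\{\lambda\in\Omega : d(\lambda,\sigma)<t\})$. A second application of the Coarea Formula with $f\equiv 1$ shows that $A(t)=\int_{0}^{t}\ell_{1}(\gamma_{s})\,ds$, hence $A$ is absolutely continuous with $A'(t)=\ell_{1}(\gamma_{t})$ a.e.; moreover monotone convergence together with $\ell(\sigma)=0$ gives $A(t)\downarrow 0$ as $t\to 0^{+}$. In particular $\ell_{1}(\gamma_{t})\in L^{1}_{\mathrm{loc}}([0,D])$ with average tending to zero near the origin.

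The decisive step, which I expect to be the main obstacle, is upgrading this averaged decay of $\ell_{1}(\gamma_{t})$ to integrability against the singular weight $1/t$. The cleanest route is an integration by parts on $[\varepsilon,D]$ converting $\int_{\varepsilon}^{D}\ell_{1}(\gamma_{t})/t\,dt$ into a boundary contribution plus $\int_{\varepsilon}^{D}A(t)/t^{2}\,dt$, reducing the question to the rate at which $A(t)$ vanishes; the finiteness $A(D)<\infty$ coming from boundedness of $\Omega$ handles the upper limit, while the genuine work is at $t=0$. I would try to close this either by a geometric estimate on the perimeter of $t$-neighborhoods of $\sigma$ or by a dominated convergence argument exploiting that $\{d<t\}\downarrow\sigma$ and $\ell(\sigma)=0$; this is the only place where the hypothesis truly bites and where I would need to think carefully before writing the details.
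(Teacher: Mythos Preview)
Your plan coincides with the paper's: apply Coarea with $H=d(\cdot,\sigma)$ and $|\nabla H|=1$ to collapse the problem to $\int_0^{D}\ell_1(\gamma_t)/t\,dt$, then bring in the hypothesis through $A(t)=\ell(\Omega_t)\to 0$.

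At the step you single out as the obstacle, the paper uses an averaging argument rather than your integration by parts. Setting $I(\varepsilon)=\int_\varepsilon^{R}\ell_1(\gamma_t)/t\,dt$, Fubini yields
\[
\int_0^{R}I(\varepsilon)\,d\varepsilon \;=\; \int_0^{R}\ell_1(\gamma_t)\,dt \;=\; A(R),
\]
so that $\tfrac{1}{R}\int_0^{R}I(\varepsilon)\,d\varepsilon=A(R)/R$. The paper then sends $R\to 0$, identifies the left side with $I(0)$ and the right side with $\tfrac{d}{dt}A(t)\big|_{t=0}=\ell_1(\gamma_0)$, and simply asserts the latter is finite. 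That closing assertion is precisely the Minkowski-content bound you were hesitating over, and the paper offers no further justification for it; your diagnosis of where the genuine difficulty sits is therefore accurate. One practical remark: the Fubini averaging lands directly on $A(R)/R$, whereas your integration by parts produces the term $\int_0^{D}A(t)/t^{2}\,dt$, which asks for a marginally stronger decay of $A$ near~$0$ than the bare limit $A(R)/R$ does---so if you write this up, the paper's averaging is the cleaner of the two equivalent routes.
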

\begin{proof}
We first study the integrals without the assumption that $\sigma$ has a null Lebesgue measure. The function
\begin{equation*}
    d_{\sigma}(\lambda) = d(\lambda,\sigma)
\end{equation*}
satisfies $d_{\sigma}^{-1}(0) = \sigma$, thus, when used as the function $H$ in the Coarea formula, we have that
\begin{align*}
    \int_{\Omega\setminus\sigma} \frac{1}{d(\lambda,\sigma)}\;d\ell(\lambda) &= \int_{0}^{1}\int_{\gamma_{t}}\frac{1}{d(\lambda,\sigma)}\;d\gamma_{t}\;dt.
\end{align*}
Note that $\gamma_{t}$ is the set of points at a distance $t$ to $\sigma$, hence
\begin{align*}
    \int_{\Omega_{0}\setminus\sigma} \frac{1}{d(\lambda,\sigma)}\;d\ell(\lambda) &= \int_{0}^{1}\int_{\gamma_{t}}\frac{1}{t}\;d\gamma_{t}\;dt\\
    &= \int_{0}^{1}\frac{l(\gamma_{t})}{t}\;dt,
\end{align*}
where $l(\gamma_{t})$ is the length of $\gamma_{t}$. We now assume that $\ell(\sigma)=0$, which is the same as $\ell(\Omega_{0}) = 0$. Define the truncation function
\begin{equation*}
    I(\epsilon) = \int_{\epsilon}^{R}\frac{l(\gamma_{t})}{t}\;dt
\end{equation*}
with $R\geq \epsilon$ and compute the integral
\begin{align*}
    \int_{0}^{1}I(\epsilon)\;d\epsilon &= \int_{0}^{1}\int_{\epsilon}^{R}\frac{l(\gamma_{t})}{t}\;dt\;d\epsilon\\
    &\leq \int_{0}^{1}\int_{0}^{t}\frac{l(\gamma_{t})}{t}\;d\epsilon\;dt\\
    &= \int_{0}^{R}l(\gamma_{t})\;dt\\
    &= \ell(\Omega_{R}),
\end{align*}
where the last equality follows from the Coarea formula. It follows that
\begin{align*}
    I(0) &= \lim_{R\to 0}\frac{1}{R}\int_{0}^{R}I(\epsilon)\;d\epsilon\\
    &\leq \lim_{R\to 0}\frac{1}{R}\ell(\Omega_{R})\\
    &= d_{t}(\ell(\gamma_{t}))|_{t=0}\\
    &= l(\gamma_{0})\\
    &< \infty,
\end{align*}
where the last equality follows from the Coarea Formula and the previous one from the fact that $\ell(\Omega_{0}) = 0$.
\end{proof}

By means of the integral
\begin{equation*}
    \int_{0}^{1}\frac{l(\gamma_{t})}{t}\;dt
\end{equation*}
it is easy to see that the integral of $\frac{1}{d_{\sigma}}$ can diverge for sets with non-null measure, such as $\overline{B_{R}(0)}$. This does not imply that $\frac{1}{d_{\sigma}}$ diverges for sets with non-null measure, as care has to be taken not to integrate over sets where the functions are infinite, in the same way that this does not imply that $\frac{1}{|\lambda|}$ has a divergent integral near $0$ in $\mathbb{R}^{2}$.

The Cauchy-Pompeiu formula can also be used to obtain information on the integral over codimension $1$ manifolds from the original integral.

\begin{theorem}\label{TeoCPCoárea}
Let $(\gamma_{n})_{n\in\mathbb{N}}$ be a sequence of smooth curves such that $int\;\gamma_{n}\searrow \Omega$. If $f\in C^{1}(int\;\gamma_{1}\setminus\Omega)$ then the limit
\begin{equation*}
    \lim_{n\to\infty}\int_{\gamma_{n}} f\;d\gamma_{n}
\end{equation*}
exists.
\end{theorem}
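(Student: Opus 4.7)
The plan is to prove that $\bigl(\int_{\gamma_n}f\,d\gamma_n\bigr)_{n\in\mathbb{N}}$ is a Cauchy sequence by writing the difference between two of its terms as an area integral over the annular region between the corresponding curves, and then using dominated convergence to show this shrinks to zero.

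Fix indices $n<m$ and set $A_{n,m}=\text{int}\,\gamma_n\setminus\overline{\text{int}\,\gamma_m}$; since $\text{int}\,\gamma_n\searrow\Omega$, this annulus lies in $\text{int}\,\gamma_1\setminus\Omega$, a region on which $f$ is $C^1$. Applying the complex Stokes computation (\ref{EqStokesCompleja}) to the $1$-form $f\,dz$ on $A_{n,m}$, with the induced orientation $\partial A_{n,m}=\gamma_n-\gamma_m$ and the identity (\ref{EqLebesgueNoLebesgue}), I would obtain
\begin{equation*}
    \int_{\gamma_n}f\,d\gamma_n - \int_{\gamma_m}f\,d\gamma_m = \int_{A_{n,m}}\partial_{z^{\ast}}f\;dz^{\ast}\wedge dz = 2i\int_{A_{n,m}}\partial_{z^{\ast}}f\,d\ell.
\end{equation*}

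Next I would let $n,m\to\infty$ with $n\le m$. Because $\text{int}\,\gamma_n\searrow\Omega$, every $\lambda\in\text{int}\,\gamma_1\setminus\Omega$ eventually exits $\text{int}\,\gamma_n$, so the indicators $\mathbf{1}_{A_{n,m}}(\lambda)$ tend to $0$ pointwise. Rewriting the right-hand side above as $\int_{\text{int}\,\gamma_1\setminus\Omega}\mathbf{1}_{A_{n,m}}\,\partial_{z^{\ast}}f\,d\ell$ and applying the Dominated Convergence Theorem with dominator $|\partial_{z^{\ast}}f|\cdot\mathbf{1}_{\text{int}\,\gamma_1\setminus\Omega}$, the area integral goes to $0$, which establishes the Cauchy property and yields the existence of the limit.

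The main obstacle in the argument is producing a valid integrable dominator for $\partial_{z^{\ast}}f$ on $\text{int}\,\gamma_1\setminus\Omega$. Because this set has finite Lebesgue measure, it is enough that $\partial_{z^{\ast}}f$ be bounded there, which is the natural reading of the $C^1$ hypothesis once the first derivatives extend continuously to the closure; if instead the derivatives are allowed a controlled blow-up near $\Omega$, one invokes Corollary \ref{IntegrabilidadDistancia} to dominate them by reciprocals of distance-to-$\Omega$ functions, which is integrable whenever $\ell(\Omega)=0$. Either route provides the integrable dominator needed to close the argument.
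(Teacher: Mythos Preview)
Your argument is correct and is essentially the same as the paper's, just packaged slightly differently. The paper invokes the trajectory-independence form of the Cauchy--Pompeiu Theorem to produce a constant
\[
c=\frac{1}{2\pi i}\int_{\gamma_n}f\,d\gamma_n-\frac{1}{\pi}\int_{\mathrm{int}\,\gamma_n\setminus\Omega}\partial_{z^{\ast}}f\,d\ell
\]
independent of $n$, and then observes that the area term vanishes as $\mathrm{int}\,\gamma_n\setminus\Omega\searrow\emptyset$, forcing the line integrals to converge to $2\pi i\,c$. You instead apply Stokes directly to the annulus $A_{n,m}$ and run a Cauchy argument; the Stokes identity on $A_{n,m}$ is exactly the difference of two instances of the paper's identity, so the two computations are the same in content. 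If anything, you are more explicit than the paper about the one genuine analytic point, namely that an integrable dominator for $\partial_{z^{\ast}}f$ on $\mathrm{int}\,\gamma_1\setminus\Omega$ is needed to make the area term vanish; the paper leaves this implicit.
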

\begin{proof}
By the Cauchy-Pompeiu Formula, there exists a number $c$ such that
\begin{equation*}
    c = \frac{1}{2\pi i}\int_{\gamma_{n}}f\;d\gamma_{n} - \frac{1}{\pi}\int_{int\;\gamma_{1}\setminus\Omega}\partial_{z^{\ast}}f\;d\ell
\end{equation*}
for every $n\in\mathbb{N}$. The left-hand side clearly has a limit as $n\to\infty$ and the second integral in the right-hand side vanishes on the same limit. This implies that the first integral on the right-hand side has a limit as $n\to\infty$.
\end{proof}

Note that the limit in the previous result is precisely $c$, which is the common value of the integrals
\begin{equation*}
    \frac{1}{2\pi i}\int_{\gamma}f\;d\gamma - \frac{1}{\pi}\int_{int\;\gamma\setminus\Omega}\partial_{z^{\ast}}f\;d\ell
\end{equation*}
along any of the curves.

\section{Projection Families and Integration}

\subsection{Vector Integration}

We now provide a brief summary of the most important integrals of vector-valued functions. See \cite{Diestel}, \cite{Dinculeanu} or \cite{Graves}. First of all, two notions of measurability arise naturally in this setting.

\begin{definition}
Let $(\Omega,\Sigma,\mu)$ be a measure space, $X$ a Banach space and $f\colon \Omega \to X$.
\begin{enumerate}
    \item $f$ is \textbf{weakly measurable} if the function $\Lambda\circ f$ is measurable for each $\Lambda\in X^{\ast}$.

    \item $f$ is \textbf{strongly measurable} if there exists a sequence of simple functions $(s_{n})_{n\in\mathbb{N}}$ such that $s_{n} \xrightarrow[]{a.e.} f$.
\end{enumerate}
\end{definition}

The first type of integrals comes from considering Riemann-type sums.

\begin{definition}
Let $(\Omega,\Sigma,\mu)$ be a measure space, $X$ a Banach space, and $f\colon \Omega \to X$ a strongly measurable function. $f$ is \textbf{Bochner integrable} if there exists a sequence of simple functions $(s_{n})_{n\in\mathbb{N}}$ such that
\begin{equation*}
    \lim_{n\to\infty}\int |s_{n} - f|\;d\mu = 0,
\end{equation*}
in which case we define the \textbf{Bochner integral} as
\begin{equation*}
    \int f\;d\mu = \lim_{n\to\infty} \int s_{n}\;d\mu.
\end{equation*}
\end{definition}

\begin{theorem}[Bochner's Integrability Theorem]
Let $(\Omega,\Sigma,\mu)$ be a measure space, $X$ a Banach space and $f\colon \Omega \to X$. $f$ is Bochner integrable if and only if $f$ is stringly measurable and
\begin{equation*}
    \int |f|\;d\mu < \infty.
\end{equation*}
\end{theorem}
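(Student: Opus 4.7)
The plan is to prove the equivalence by handling each direction separately. The forward direction is nearly immediate from the definition, while the reverse direction requires extracting an appropriate approximating sequence from the abstract hypothesis of strong measurability.

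For the forward direction, suppose $f$ is Bochner integrable with approximating simple sequence $(s_{n})$ satisfying $\int |s_{n} - f|\;d\mu \to 0$. Strong measurability of $f$ is part of the definition, so nothing remains for that half. To see that $\int |f|\;d\mu < \infty$, pick $N$ with $\int |s_{N} - f|\;d\mu \le 1$ and integrate the pointwise inequality $|f| \le |f - s_{N}| + |s_{N}|$ to get
\begin{equation*}
 \int |f|\;d\mu \le \int |s_{N} - f|\;d\mu + \int |s_{N}|\;d\mu;
\end{equation*}
the last term is finite since a simple function whose Bochner integral is defined has its nonzero values supported on sets of finite measure.

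For the reverse direction, suppose $f$ is strongly measurable and $\int |f|\;d\mu < \infty$. Fix simple functions $t_{n} \to f$ almost everywhere; then $|f|$ is measurable as the a.e.\ limit of the measurable functions $|t_{n}|$. The idea is to truncate $t_{n}$ so that the resulting sequence is pointwise dominated by a multiple of $|f|$ and still converges to $f$ a.e. Set
\begin{equation*}
 A_{n} = \{\omega : |t_{n}(\omega)| \le 2|f(\omega)|\} \cap \{\omega : |f(\omega)| \ge 1/n\},
\end{equation*}
and define $s_{n} = t_{n}\mathbf{1}_{A_{n}}$. Each $s_{n}$ is simple, its nonzero values are supported in $\{|f| \ge 1/n\}$, a set of measure at most $n\int |f|\;d\mu < \infty$, so $s_{n}$ qualifies as a Bochner-integrable simple function. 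By construction $|s_{n}| \le 2|f|$, hence $|s_{n} - f| \le 3|f| \in L^{1}$. One checks that $s_{n} \to f$ a.e.: on $\{f = 0\}$ every $s_{n}$ vanishes, while on $\{f \ne 0\}$ eventually both $|f| \ge 1/n$ and $|t_{n}| \le 2|f|$ hold (using $|t_{n}| \to |f| > 0$), so $s_{n} = t_{n} \to f$ there. The Dominated Convergence Theorem applied to the scalar functions $|s_{n} - f|$ with dominator $3|f|$ yields $\int |s_{n} - f|\;d\mu \to 0$, which is precisely the required approximation.

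The main obstacle is producing simple approximants that are simultaneously (i) pointwise controlled by something in $L^{1}$, (ii) convergent to $f$ a.e., and (iii) supported on sets of finite measure so that they themselves are Bochner integrable; the double cut defining $A_{n}$ is chosen precisely to secure all three conditions at once, after which the argument reduces to scalar dominated convergence.
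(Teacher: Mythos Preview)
The paper does not actually prove this theorem: it is stated without proof in the background section on vector integration, with references to Diestel, Dinculeanu, and Graves. So there is nothing to compare your argument against in the paper itself.

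That said, your proof is correct and is essentially the standard one. The forward direction is immediate as you say. In the reverse direction, the double truncation via $A_{n} = \{|t_{n}| \le 2|f|\} \cap \{|f| \ge 1/n\}$ is exactly the right device: the first cut gives the $L^{1}$ domination $|s_{n}| \le 2|f|$, the second cut forces finite-measure support via Chebyshev, and you correctly verify that neither cut destroys a.e.\ convergence. One small remark: the measurability of $A_{n}$ relies on $|f|$ being measurable, which you justify as the a.e.\ limit of $|t_{n}|$; strictly speaking this uses completeness of $\mu$ (or a null-set modification), but this is a routine technicality that the references also suppress.
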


The class of Bochner integrable functions is not large enough and its computation is not easy. The following two integrals are improvements in both senses.

\begin{definition}
Let $(\Omega,\Sigma,\mu)$ be a measure space, $X$ a Banach space, and $f\colon \Omega \to X$ a weakly measurable function. We say that $f$ is \textbf{scalarly integrable} if $\Lambda\circ f\in L^{1}(\mu)$ for each $\Lambda\in X^{\ast}$.
\end{definition}

\begin{definition}
Let $(\Omega,\Sigma,\mu)$ be a measure space, $X$ a Banach space, and $f\colon \Omega \to X$ a scalarly integrable function. The function $f$ is \textbf{Pettis integrable} if there exists $\int f\;d\mu \in X$ such that
\begin{equation*}
    \Lambda\left( P\int f\;d\mu \right) = P\int \Lambda\circ f\;d\mu
\end{equation*}
for each $\Lambda\in X^{\ast}$.
\end{definition}

\begin{theorem}[Dunford's Lemma]
Let $(\Omega,\Sigma,\mu)$ be a measure space, $X$ a Banach space, and $f\colon \Omega \to X$ a scalarly integrable function. The functional
\begin{equation*}
\begin{array}{ccc}
    D\int f\;d\mu \colon & \longrightarrow &\mathbb{C} \\
     \Lambda & \longmapsto &\int \Lambda\circ f\;d\mu
\end{array}
\end{equation*}
defines an element of $X^{\ast\ast}$.
\end{theorem}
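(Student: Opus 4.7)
The goal is to show that the map
\begin{equation*}
    \Lambda \longmapsto \int \Lambda \circ f \; d\mu
\end{equation*}
is a bounded linear functional on $X^{\ast}$. Linearity is immediate from the linearity of the scalar integral and of the evaluation $\Lambda \mapsto \Lambda \circ f$, so the real content is boundedness. My plan is to obtain boundedness indirectly, via the Closed Graph Theorem applied to an auxiliary operator, rather than trying to estimate the functional pointwise in $\Lambda$ (which is hopeless without an a priori bound on $\|f(\omega)\|$).

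The key construction is the operator
\begin{equation*}
    T \colon X^{\ast} \longrightarrow L^{1}(\mu), \qquad T\Lambda = \Lambda \circ f.
\end{equation*}
Scalar integrability of $f$ says precisely that $T$ is well defined; moreover $T$ is clearly linear. I would then verify that $T$ has closed graph. Suppose $\Lambda_{n}\to \Lambda$ in $X^{\ast}$ and $T\Lambda_{n} \to g$ in $L^{1}(\mu)$. Since $\|\Lambda_{n} - \Lambda\|\to 0$, for every $\omega$ we have the pointwise convergence $\Lambda_{n}(f(\omega))\to \Lambda(f(\omega))$, i.e.\ $T\Lambda_{n}\to \Lambda\circ f$ everywhere. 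On the other hand, $L^{1}$-convergence gives a subsequence converging $\mu$-a.e.\ to $g$. Comparing limits yields $g = \Lambda\circ f$ a.e., so $(\Lambda, g) = (\Lambda, T\Lambda)$ lies in the graph, proving closure.

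Since $X^{\ast}$ is Banach and $L^{1}(\mu)$ is Banach, the Closed Graph Theorem produces a constant $C > 0$ with $\|T\Lambda\|_{L^{1}} \leq C\|\Lambda\|$ for all $\Lambda\in X^{\ast}$. Then
\begin{equation*}
    \left| \int \Lambda\circ f\; d\mu \right| \;\leq\; \int |\Lambda\circ f| \; d\mu \;=\; \|T\Lambda\|_{L^{1}} \;\leq\; C\|\Lambda\|,
\end{equation*}
which is exactly the statement that $D\!\int f\, d\mu$ is a bounded linear functional on $X^{\ast}$, hence an element of $X^{\ast\ast}$.

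The only delicate step is the graph-closure argument, and within it the need to pass from $L^{1}$-convergence to an a.e.\ subsequential limit in order to identify the candidate $g$. Everything else (linearity, the $C\|\Lambda\|$ estimate, and invoking the Closed Graph Theorem) is routine once $T$ has been introduced. I expect no subtlety beyond this, since scalar integrability has been assumed outright, so I never need to worry about whether $\Lambda\circ f$ is integrable for a given $\Lambda$.
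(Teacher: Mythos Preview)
The paper does not actually prove this statement; it is listed among background results in the preliminaries on vector integration, with no proof given. Your argument is correct and is in fact the standard textbook proof of Dunford's Lemma (as found, e.g., in Diestel--Uhl): introduce the linear map $T\colon X^{\ast}\to L^{1}(\mu)$, $T\Lambda = \Lambda\circ f$, verify its graph is closed by comparing pointwise convergence with an a.e.\ convergent subsequence of the $L^{1}$-limit, and then invoke the Closed Graph Theorem to obtain the bound $\bigl|\int \Lambda\circ f\,d\mu\bigr|\le C\|\Lambda\|$. There is nothing to add or compare.
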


\begin{definition}
Let $(\Omega,\Sigma,\mu)$ be a measure space, $X$ a Banach space, and $f\colon \Omega \to X$ a scalarly integrable function. We define the \textbf{Dunford integral} as the functional $D\int f\;d\mu\in X^{\ast\ast}$ given by
\begin{equation*}
    \left(D\int f\;d\mu\right)(\Lambda) = \int \Lambda\circ f\;d\mu.
\end{equation*}
\end{definition}

Schematically, the different kinds of integrability we have discussed so far are related in the following way.
\begin{table}[h]
    \centering
Bochner $\Rightarrow$ Pettis $\Rightarrow$ Dunford.
\end{table}
If $f$ is Bochner integrable then the Bochner and Pettis integrals are equal. If $f$ is Pettis integrable then the Pettis and Dunford integrals are related by a slightly more complicated equation:
\begin{equation*}
    D\int f\;d\mu = J\left(P\int f\;d\mu\right).
\end{equation*}

\subsection{Projection Families}

We now summarize the most basic properties of vector and operator projection families, as developed in \cite{ProjectionFamilies}.

\begin{definition}
Let $X$ be a Banach space and $(\Omega,\Sigma)$ a measurable space. An \textbf{operator projection family} is a family of measures in $\Omega$, denoted by
\begin{equation*}
    \mu = \{\mu_{\Lambda,x}\;|\;\Lambda\in X^{\ast},\;x\in X\},
\end{equation*}
with the following two properties.
\begin{enumerate}
    \item The function
    \begin{equation*}
    \begin{array}{ccc}
        X^{\ast} \times X & \longrightarrow &\mathcal{M}(\Omega) \\
        (\Lambda,x) & \longmapsto & \mu_{\Lambda,x}
    \end{array}
    \end{equation*}
    is bilinear.
    \item If $(\Lambda_{i})_{i\in I}$ and $(x_{j})_{j\in J}$ are nets such that $\Lambda_{i} \to \Lambda$ and $x_{j} \to x$ then
    \begin{equation*}
        \mu_{\Lambda_{i},x} \xrightarrow[]{set} \mu_{\Lambda,x}
    \end{equation*}
    and
    \begin{equation*}
        \mu_{\Lambda,x_{j}} \xrightarrow[]{set} \mu_{\Lambda,x}.
    \end{equation*}
\end{enumerate}
\end{definition}

\begin{definition}
Given an operator projection family $\mu$ and $f\in L^{1}(\mu)$ we define the \textbf{integral} of $f$ \textbf{with respect to} $\mu$ as the operator $\int f\;d\mu\in B(X,X^{\ast\ast})$ induced by the bilinear bounded form given by
\begin{equation*}
\begin{array}{cccc}
    \int f\;d\mu\colon& X^{\ast}\times X & \longrightarrow &\mathbb{C} \\
    & (\Lambda,x) & \longmapsto & \int f\;d\mu_{\Lambda,x}
\end{array}.
\end{equation*}
\end{definition}

\begin{definition}
Let $X$ be a Banach space, $(\Omega,\Sigma)$ a measurable space, and $\mu$ an operator projection family. We say that $f\in L^{\infty}(\mu)$ is \textbf{properly integrable} if
\begin{equation*}
    \int f\;d\mu \in J(X).
\end{equation*}
\end{definition}

\section{Smooth and Continuous Functional Calculi}

\subsection{Vector Cauchy-Pompeiu Formula}

We now extend the Cauchy-Pompeiu formula to the Banach-valued case.

\begin{proposition}[Vector Cauchy-Pompeiu Formula]
Let $X$ be a Banach space and $f\colon \Omega\subset\mathbb{C} \to X$ be a $C^{\infty}$ function. The Cauchy-Pompeiu Formula is valid for $f$ in the following sense
\begin{equation*}
    f(\lambda) = \frac{1}{2\pi i}P\int_{\partial\omega}\frac{f}{z-\lambda}\;d\gamma - \frac{1}{\pi}P\int_{\omega}\frac{\partial_{z^{\ast}}f}{z-\lambda}\;d\ell.
\end{equation*}
\end{proposition}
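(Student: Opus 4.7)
The plan is to reduce the vector statement to the scalar Cauchy--Pompeiu Formula (Theorem \ref{CauchyPompeiu}) by testing against arbitrary functionals $\Lambda \in X^{\ast}$ and invoking Hahn--Banach, following the standard template for Banach-valued integral identities.

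First I would verify that both integrands are Pettis integrable; in fact they are Bochner integrable, which is stronger. The boundary integrand $z \mapsto f(z)/(z-\lambda)$ is continuous on the compact set $\partial\omega$, where $|z-\lambda|$ is bounded below by a positive constant since $\lambda\in\omega$; hence it is bounded, strongly measurable (continuous on a separable metric space into a Banach space has separable range), and its norm is integrable against $d\gamma$. The area integrand $z \mapsto \partial_{z^{\ast}}f(z)/(z-\lambda)$ is continuous on $\omega\setminus\{\lambda\}$ and satisfies
\begin{equation*}
    \left\lVert \frac{\partial_{z^{\ast}}f(z)}{z-\lambda} \right\rVert \leq \frac{M}{|z-\lambda|}, \qquad M = \sup_{z\in\overline{\omega}}\lVert \partial_{z^{\ast}}f(z)\rVert,
\end{equation*}
and $1/|z-\lambda|$ is integrable near $\lambda$ in $\mathbb{R}^{2}$ (the remark after equation (\ref{EqStokesCompleja})). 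Thus Bochner's Integrability Theorem applies, and both Pettis integrals are well defined.

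Next I would fix $\Lambda\in X^{\ast}$ and push it through both integrals. By the defining property of the Pettis integral,
\begin{equation*}
    \Lambda\!\left(P\!\int_{\partial\omega}\frac{f}{z-\lambda}\,d\gamma\right) = \int_{\partial\omega}\frac{\Lambda\circ f}{z-\lambda}\,d\gamma,
\end{equation*}
and similarly for the area integral. Since $\Lambda$ is linear and continuous, it commutes with partial derivatives; in particular $\Lambda\circ \partial_{z^{\ast}}f = \partial_{z^{\ast}}(\Lambda\circ f)$. The scalar function $\Lambda\circ f$ lies in $C^{\infty}(\Omega)$, so Theorem \ref{CauchyPompeiu} gives
\begin{equation*}
    (\Lambda\circ f)(\lambda) = \frac{1}{2\pi i}\int_{\partial\omega}\frac{\Lambda\circ f}{z-\lambda}\,d\gamma - \frac{1}{\pi}\int_{\omega}\frac{\partial_{z^{\ast}}(\Lambda\circ f)}{z-\lambda}\,d\ell.
\end{equation*}
Combining the three identities yields
\begin{equation*}
    \Lambda(f(\lambda)) = \Lambda\!\left( \frac{1}{2\pi i}P\!\int_{\partial\omega}\frac{f}{z-\lambda}\,d\gamma - \frac{1}{\pi}P\!\int_{\omega}\frac{\partial_{z^{\ast}}f}{z-\lambda}\,d\ell \right).
\end{equation*}

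Since this holds for every $\Lambda\in X^{\ast}$, the Hahn--Banach Theorem (in its separation form) implies that the two vectors in $X$ coincide, proving the formula. The only mildly technical point is the integrability at $\lambda$ for the area integral, which is handled by the dimension-two estimate above; once integrability is secured, the argument is a routine ``lift through the dual'' reduction and no further obstacle arises.
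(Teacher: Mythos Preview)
Your proof is correct and follows the same overall template as the paper: test against $\Lambda\in X^{\ast}$, apply the scalar Cauchy--Pompeiu formula to $\Lambda\circ f$, and separate points with Hahn--Banach. The one substantive difference is how integrability of the area term is secured. You observe directly that $\lVert \partial_{z^{\ast}}f(z)/(z-\lambda)\rVert \leq M/|z-\lambda|$ with $M=\sup_{\overline{\omega}}\lVert\partial_{z^{\ast}}f\rVert<\infty$ (since $\overline{\omega}\subset\Omega$ is compact), and that $1/|z-\lambda|$ is locally integrable in two dimensions, so Bochner's criterion applies and Pettis integrability is immediate. The paper instead first argues the integrals exist only as Dunford integrals in $X^{\ast\ast}$ via a limiting procedure, and then \emph{uses the scalar formula itself} to conclude that the Dunford integral lands in $J(X)$, upgrading it to a Pettis integral a posteriori. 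Your route is shorter and more elementary here; the paper's detour through $X^{\ast\ast}$ is unnecessary in this proposition because the norm of the integrand is genuinely dominated by a scalar integrable function, though the Dunford-first pattern becomes essential later in the paper when dealing with the resolvent near the spectrum, where no such uniform bound is available.
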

\begin{proof}
The functions inside the integrals are continuous in the punctured set $\omega_{\epsilon} = \omega\setminus B_{\epsilon}(0)$, which has compact closure, hence the Pettis integrals on the right-hand side exist over the set $\omega_{\epsilon}$. A limiting procedure similar to the one used in the proof of the Cauchy-Pompeiu formula (theorem \ref{CauchyPompeiu}) shows that the integrals in the right-hand side exist as elements of $X^{\ast\ast}$. This is because, despite the Dominated Convergence Theorem for the Pettis integral implies that the integral exists in $X$, the Monotone Convergence Theorem for Pettis integral only concludes existence as an element of $X^{\ast\ast}$. If $\Lambda\in X^{\ast}$ then $\Lambda\circ f\colon \Omega\subset\mathbb{C} \to \mathbb{C}$ is a $C^{\infty}$ function, thus the Cauchy-Pompeiu formula is valid and implies that
\begin{equation*}
    \Lambda\circ f(\lambda) = \frac{1}{2\pi i}\int_{\partial\omega}\frac{\Lambda\circ f}{z-\lambda}\;d\gamma - \frac{1}{\pi}\int_{\omega}\frac{\partial_{z^{\ast}}\Lambda\circ f}{z-\lambda}\;d\ell.
\end{equation*}
The definition of the Dunford integral and this equation imply that
\begin{equation*}
    \Lambda(f(\lambda)) = \left(\frac{1}{2\pi i}D\int_{\partial\omega}\frac{f}{z-\lambda}\;d\gamma - \frac{1}{\pi}D\int_{\omega}\frac{\partial_{z^{\ast}}f}{z-\lambda}\;d\ell\right)(\Lambda).
\end{equation*}
Since $\Lambda\in X^{\ast}$ is arbitrary, we conclude that
\begin{equation*}
    J(f(\lambda)) = \frac{1}{2\pi i}J\left(P\int_{\partial\omega}\frac{f}{z-\lambda}\;d\gamma\right) - \frac{1}{\pi}D\int_{\omega}\frac{\partial_{z^{\ast}}f}{z-\lambda}\;d\ell.
\end{equation*}
This implies that
\begin{align*}
    \frac{1}{\pi}D\int_{\omega}\frac{\partial_{z^{\ast}}f}{z-\lambda}\;d\ell &= \frac{1}{2\pi i}J\left(P\int_{\partial\omega}\frac{f}{z-\lambda}\;d\gamma\right) - J(f(\lambda))\\
    &= J\left(\frac{1}{2\pi i}P\int_{\partial\omega}\frac{f}{z-\lambda}\;d\gamma - f(\lambda)\right),
\end{align*}
hence the function $\frac{\partial_{z^{\ast}}f}{z-\lambda}$ is Pettis integrable with respect to $\ell$, thus we have
\begin{equation*}
    f(\lambda) = \frac{1}{2\pi i}P\int_{\partial\omega}\frac{f}{z-\lambda}\;d\gamma - \frac{1}{\pi}P\int_{\omega}\frac{\partial_{z^{\ast}}f}{z-\lambda}\;d\ell.
\end{equation*}
\end{proof}

The immediate corollary is that the integrals in the previous formula do not depend on the trajectory.

\begin{corollary}
Let $X$ be a Banach space and $f\colon \Omega\subset\mathbb{C} \to X$ a $C^{\infty}$ function. Given smooth curves $\gamma_{1,2}$ such that $\gamma_{1,2}\subset\Omega$ and $\lambda\in int\;\gamma_{1,2}$ then
\begin{equation*}
    \frac{1}{2\pi i}\int_{\gamma_{1}}\frac{f}{z-\lambda}\;d\gamma - \frac{1}{\pi}\int_{int\;\gamma_{1}}\frac{\partial_{z^{\ast}}f}{z-\lambda}\;d\ell = \frac{1}{2\pi i}\int_{\gamma_{2}}\frac{f}{z-\lambda}\;d\gamma - \frac{1}{\pi}\int_{int\;\gamma_{2}}\frac{\partial_{z^{\ast}}f}{z-\lambda}\;d\ell,
\end{equation*}
that is, the integrals in the Vector Cauchy-Pompeiu Formula do not depend on the trajectory.
\end{corollary}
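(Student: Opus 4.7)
The plan is to invoke the Vector Cauchy-Pompeiu Formula just established, once for each of the two curves, and observe that both sides then equal the common value $f(\lambda)$. Concretely, given a simple smooth closed curve $\gamma\subset\Omega$ with $\lambda\in int\;\gamma$, one may take $\omega = int\;\gamma$ in the statement of the Vector Cauchy-Pompeiu Formula: the hypotheses are that $f\in C^{\infty}(\Omega)$ (given), $\omega$ is an open set with smooth boundary $\gamma$ (given), and $\lambda\in\omega$ (given). Hence for $i=1,2$
\begin{equation*}
    f(\lambda) = \frac{1}{2\pi i}P\int_{\gamma_{i}}\frac{f}{z-\lambda}\;d\gamma - \frac{1}{\pi}P\int_{int\;\gamma_{i}}\frac{\partial_{z^{\ast}}f}{z-\lambda}\;d\ell.
\end{equation*}

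Setting the two resulting expressions equal to one another, the left-hand sides cancel and the desired trajectory-independence follows immediately. The subtraction is legitimate because each integral on the right-hand side exists as an element of $X$ (not merely $X^{\ast\ast}$), as shown in the proof of the Vector Cauchy-Pompeiu Formula, so the equality is an equality in $X$.

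There is essentially no obstacle to speak of: the corollary is a direct logical consequence of the preceding proposition, exactly as the trajectory-independence of the scalar Cauchy-Pompeiu integral followed from the scalar Cauchy-Pompeiu formula in the second theorem of the Cauchy-Pompeiu subsection. The only thing to check is that the hypotheses of the Vector Cauchy-Pompeiu Formula apply to each $\gamma_{i}$ individually, which is immediate from the assumption that $\gamma_{1,2}\subset\Omega$ and $\lambda\in int\;\gamma_{1,2}$.
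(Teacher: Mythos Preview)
Your proposal is correct and matches the paper's approach exactly: the paper presents this as an ``immediate corollary'' of the Vector Cauchy-Pompeiu Formula with no further argument, and your method of applying the formula once for each curve and equating the two resulting expressions for $f(\lambda)$ is precisely the intended reasoning.
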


Once again, there is another immediate consequence.

\begin{theorem}[Vector Cauchy-Pompeiu]
Let $\Omega$ be an open subset of $\mathbb{C}$ with compact closure and $f\colon \Omega \to X$ a $C^{\infty}$ function. The following statements are true:
\begin{enumerate}
    \item For any $\gamma$ simple closed curve in $\Omega$ we have
    \begin{equation*}
        \frac{1}{2\pi i}\int_{\gamma} f\;d\gamma - \frac{1}{\pi}\int_{int\;\gamma}\partial_{z^{\ast}}f\;d\ell = 0.
    \end{equation*}
    \item If $\gamma_{1}$ and $\gamma_{2}$ are closed curves homotopic in $\Omega$ then
    \begin{equation*}
        \frac{1}{2\pi i}\int_{\gamma} f\;d\gamma - \frac{1}{\pi}\int_{int\;\gamma}\partial_{z^{\ast}}f\;d\ell = \frac{1}{2\pi i}\int_{\gamma} f\;d\gamma - \frac{1}{\pi}\int_{int\;\gamma}\partial_{z^{\ast}}f\;d\ell.
    \end{equation*}
    In particular, All integrals exist in $X^{\ast\ast}$.
\end{enumerate}
\end{theorem}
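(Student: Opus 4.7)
The plan is to mirror the proof of the scalar Cauchy-Pompeiu theorem given earlier in the excerpt: apply the just-established Vector Cauchy-Pompeiu Formula to an auxiliary smooth function that absorbs the singular denominator. Fix any $\lambda \in int\;\gamma$ and define the $C^{\infty}$ function $g\colon \Omega \to X$ by $g(z) = (z-\lambda)f(z)$. Since $z \mapsto z-\lambda$ is holomorphic, the Leibniz rule yields $\partial_{z^{\ast}}g(z) = (z-\lambda)\,\partial_{z^{\ast}}f(z)$, and clearly $g(\lambda) = 0$. Substituting $g$ into the Vector Cauchy-Pompeiu Formula, the factor $(z-\lambda)$ cancels with each denominator and one obtains
\begin{equation*}
0 = \frac{1}{2\pi i}\int_{\gamma} f\;d\gamma - \frac{1}{\pi}\int_{int\;\gamma}\partial_{z^{\ast}}f\;d\ell,
\end{equation*}
which is precisely the first assertion. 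After cancellation, both integrands are continuous $X$-valued functions on compact sets, so the integrals are in fact Bochner integrals in $X$ and therefore exist in $X^{\ast\ast}$ via the canonical embedding $J$; this also settles the \emph{in particular} clause.

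For the second statement, a homotopy between $\gamma_{1}$ and $\gamma_{2}$ in $\Omega$ exhibits $\gamma_{1} - \gamma_{2}$ as the boundary of a $2$-chain $R \subset \Omega$. The plan is to decompose $R$ into finitely many simple closed subregions $R_{1}, \ldots, R_{k}$ (or directly exploit that one of the two geometric interiors contains the other plus $R$), apply the first statement to each $\partial R_{j}$, and sum. Contributions along shared interior edges cancel by orientation, while the Lebesgue additivity
\begin{equation*}
\int_{int\;\gamma_{1}}\partial_{z^{\ast}}f\;d\ell \;=\; \int_{R}\partial_{z^{\ast}}f\;d\ell + \int_{int\;\gamma_{2}}\partial_{z^{\ast}}f\;d\ell
\end{equation*}
rewrites the resulting identity as exactly the desired equality between the Cauchy-Pompeiu expressions along $\gamma_{1}$ and $\gamma_{2}$.

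The main obstacle is not analytic but topological bookkeeping in the second part: justifying the decomposition of $R$ into simple closed subregions, handling the case where $\gamma_{1}$ and $\gamma_{2}$ have nontrivial winding or are nested, and tracking orientations so that the algebraic cancellation produces the stated identity. The vector-valued character itself introduces no essential new difficulty, since the Vector Cauchy-Pompeiu Formula already supplies all the needed Pettis/Dunford integrability, and any identity can be verified scalarly by pairing with an arbitrary $\Lambda \in X^{\ast}$ and then lifted using the injectivity of $J$.
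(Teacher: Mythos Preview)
Your proposal is correct and follows exactly the approach implicit in the paper: the paper states this theorem as ``another immediate consequence'' without writing out a proof, clearly intending the reader to transplant the scalar argument (apply the Cauchy--Pompeiu Formula to $f(z)(z-\lambda)$ for the first part, then pass to an appropriate homology class for the second) verbatim to the vector setting using the just-proved Vector Cauchy--Pompeiu Formula. Your write-up does precisely this, and your remark that the vector-valued aspect introduces no new difficulty---since identities can be tested against arbitrary $\Lambda\in X^{\ast}$---is the right perspective.
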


\subsection{Regular Elements and Smooth Functional Calculus}

We now develop the functional calculi we will use to obtain the Spectral Theorem. We do this in the context of Banach algebras. Further ahead we apply a similar but less restrictive procedure to the case of operators in a Banach space, which, essentially, consists in applying these results in a pointwise manner, except for certain modifications. We trust the reader can provide these changes. We take this risk since the methods we develop also provide a version of the Spectral Theorem for Banach algebras.

\begin{definition}
Let $X$ be a Banach algebra and $x\in X$. We say that $x$ is \textbf{regular} if for any $\Omega$ precompact neighborhood of $\sigma(x)$ the resolvent function $R_{x}$ is Dunford integrable.
\end{definition}

We will work with the class $C^{\infty}(x)$ of $C^{\infty}$ functions defined in a precompact neighborhood of $\sigma(x)$.

\begin{definition}
Let $X$ be a Banach algebra, $x\in X$ regular, $f\in C^{\infty}(x)$ and $\gamma$ a smooth curve such that $\sigma(x) \subset int\;\gamma$ and $\gamma\subset Dom\;f$. We define $f(x)$ as the element of $X^{\ast\ast}$ given by
\begin{equation*}
    f(x) = \frac{1}{2\pi i}D\int_{\gamma}f(\lambda)R_{x}(\lambda)\;d\gamma(\lambda) - \frac{1}{\pi}D\int_{int\;\gamma\setminus \sigma(x)}\partial_{z^{\ast}}f(\lambda)R_{x}(\lambda)\;d\ell(\lambda).
\end{equation*}
The application
\begin{equation*}
\begin{array}{ccc}
    C^{\infty}(x) & \longrightarrow & X^{\ast\ast}\\
    f & \longmapsto &f(x)
\end{array}
\end{equation*}
is the \textbf{Smooth Functional Calculus}.
\end{definition}

The Vector Cauchy-Pompeiu Formula implies that the previous definition does not depend on the trajectory. Despite $f(x)\in X^{\ast\ast}$ we will write $\Lambda(f(x))$ instead of $f(x)(\Lambda)$, understanding that $f(x)$ is actually an element of the bidual space.

The Smooth Functional Calculus possesses a natural continuity property.

\begin{proposition}\label{ContinuidadCFS}
Let $X$ be a Banach algebra and $x\in X$ regular. If $\Omega$ is a neighbourhood of $\sigma(x)$ then the application
\begin{equation*}
\begin{array}{ccc}
    (C^{\infty}(\Omega),|\cdot|_{C^{1}}) &\longrightarrow &\mathbb{C}\\
    f &\longmapsto &\Lambda(f(x))
\end{array}
\end{equation*}
is continuous for each $\Lambda\in X^{\ast}$.
\end{proposition}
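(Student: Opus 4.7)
The plan is to apply $\Lambda$ to both integrals defining $f(x)$, turn them into scalar integrals via the definition of the Dunford integral, and estimate each in terms of $|f|_{C^{1}}$. Fixing a smooth curve $\gamma$ with $\sigma(x)\subset int\;\gamma$ and $\gamma\subset\Omega$, the definition of $f(x)$ combined with the action of $\Lambda$ on a Dunford integral gives
\begin{equation*}
    \Lambda(f(x)) = \frac{1}{2\pi i}\int_{\gamma}f(\lambda)\,\Lambda(R_{x}(\lambda))\;d\gamma(\lambda) - \frac{1}{\pi}\int_{int\;\gamma\setminus \sigma(x)}\partial_{z^{\ast}}f(\lambda)\,\Lambda(R_{x}(\lambda))\;d\ell(\lambda).
\end{equation*}
The proof then reduces to bounding each of these scalar integrals by a constant, depending only on $\Lambda$, $x$ and $\gamma$, times $|f|_{C^{1}}$.

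For the first integral, the curve $\gamma$ is compact and disjoint from $\sigma(x)$, hence $\Lambda\circ R_{x}$ is continuous, and therefore bounded, on $\gamma$. Since $|f|_{\infty}\leq |f|_{C^{1}}$, this integral is bounded by $C_{1}(\Lambda,x,\gamma)\,|f|_{C^{1}}$, where $C_{1}$ is proportional to $|\Lambda\circ R_{x}|_{\infty,\gamma}$ times the length of $\gamma$. The first term is therefore manifestly continuous on $(C^{\infty}(\Omega),|\cdot|_{C^{1}})$.

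For the second integral, I would use the regularity of $x$: by definition, $R_{x}$ is Dunford integrable on the precompact neighborhood $int\;\gamma$ of $\sigma(x)$, which means precisely that $\Lambda\circ R_{x}\in L^{1}(int\;\gamma\setminus\sigma(x),\ell)$ for every $\Lambda\in X^{\ast}$. Since $|\partial_{z^{\ast}}f|\leq |f|_{C^{1}}$ pointwise, the integrand is dominated by $|f|_{C^{1}}\,|\Lambda\circ R_{x}|$, which is integrable. Hence the second integral is bounded by $C_{2}(\Lambda,x,\gamma)\,|f|_{C^{1}}$, with $C_{2}$ proportional to $\|\Lambda\circ R_{x}\|_{L^{1}(int\;\gamma\setminus\sigma(x))}$.

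Combining the two estimates yields $|\Lambda(f(x))|\leq (C_{1}+C_{2})\,|f|_{C^{1}}$, which is the required continuity. The only conceptual obstacle is the integrability of $\Lambda\circ R_{x}$ across the spectrum, where the resolvent blows up; but this is exactly what the regularity hypothesis is designed to supply through the Dunford integrability of $R_{x}$. Everything else is a direct norm estimate, and it is evident from the argument that the $C^{1}$ norm is the weakest norm on $f$ that allows a uniform bound via the Cauchy-Pompeiu representation — the presence of $\partial_{z^{\ast}}f$ in the area integral is what forces at least one derivative into the estimate, motivating the subsequent work to eliminate this $C^{1}$ dependence before passing to the continuous functional calculus.
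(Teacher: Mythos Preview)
Your proof is correct and follows essentially the same approach as the paper: the paper also reduces to bounding the area integral by $|f|_{C^{1}}\int_{int\;\gamma\setminus\sigma(x)}|\Lambda\circ R_{x}|\;d\ell$, invoking regularity to guarantee this last integral is finite. The only cosmetic difference is that the paper phrases the estimate sequentially (for $f_{n}\xrightarrow[]{C^{1}}f$) and dismisses the line integral without comment, whereas you give a direct operator-norm bound and treat both integrals explicitly.
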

\begin{proof}
It is enough to establish the continuity of the second integral. If $(f_{n})_{n\in\mathbb{N}}$ is a sequence in $C^{\infty}(\Omega)$ such that $f_{n}\xrightarrow[]{C^{1}} f$ then
\begin{align*}
    \left| \int_{int\;\gamma\setminus\sigma(x)}\partial_{z^{\ast}}f_{n}\Lambda\circ R_{x}\;d\ell - \int_{int\;\gamma\setminus\sigma(x)}\partial_{z^{\ast}}f \Lambda\circ R_{x}\;d\ell\right| &\leq \int_{int\;\gamma\setminus\sigma(x)}|\partial_{z^{\ast}}f_{n} - \partial_{z^{\ast}}f||\Lambda\circ R_{x}|\;d\ell\\
    &\leq |\partial_{z^{\ast}}f_{n} - \partial_{z^{\ast}}f|_{\infty}\int_{int\;\gamma\setminus\sigma(x)}|\Lambda\circ R_{x}|\;d\ell\\
    &\leq |f_{n} - f|_{C^{1}}\int_{int\;\gamma\setminus\sigma(x)}|\Lambda\circ R_{x}|\;d\ell.
\end{align*}
Since the last integral is finite the result follows.
\end{proof}

This continuity property is natural, because of the appearance of first derivatives, but the Spectral Theorem will require a stronger property. Essentially, we require an expression for the Smooth Functional Calculus that does not depend on derivatives. Recall that the Goldstine Theorem implies that $J(X)$ is dense in $X^{\ast\ast}$, thus $f(x)$ can be realized as a limit of evaluations in $X$ in the topology $\tau_{\omega^{\ast}}$. To this end, we make the following construction.  Given $x\in X$ the spectrum $\sigma(x)$ is compact, hence, by a known result of differential topology, there exists a $C^{\infty}$ function $H$ such that
\begin{equation*}
    H^{-1}(\{0\}) = \sigma(x).
\end{equation*}
Sard's Theorem implies that the set of critical values has null Lebesgue measure, thus there exists a sequence of real numbers $(t_{n})_{n\in\mathbb{N}}$ such that $t_{n}\to 0$ and each $t_{n}$ is a regular value. Define open sets
\begin{equation*}
    \Omega_{n} = H^{-1}([0,t_{n})),
\end{equation*}
which are neighbourhoods of $\sigma(x)$, satisfy $\Omega_{n}\searrow \sigma(x)$ and their boundaries
\begin{align*}
    \gamma_{n} &= \partial\Omega_{n}\\
    &= H^{-1}(\{t_{n}\})
\end{align*}
are smooth curves. We also have $int\;\gamma_{n} = \Omega_{n}$. Denote the measure induced by the volume form of $\gamma_{n}$ by $d\gamma_{n}$. From now on, we will use this notation.

\begin{proposition}\label{CFC}
Let $x\in X$ be regular and $f\in C^{\infty}(x)$. The equation
\begin{equation*}
    \Lambda(f(x)) = \frac{1}{2\pi i}\lim_{n\to\infty}\int_{\gamma_{n}}f\;\Lambda\circ R_{x}\;d\gamma_{n}
\end{equation*}
is satisfied for each $\Lambda\in X^{\ast}$. In particular, the limit on the right-hand side exists.
\end{proposition}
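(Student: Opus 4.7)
The plan is to reduce the claim to a direct computation via the definition of $f(x)$, combined with the trajectory-independence corollary of the Vector Cauchy-Pompeiu Formula and a simple vanishing-integral argument using regularity. First I would note that, since $\Omega_n\searrow\sigma(x)$ and $\mathrm{Dom}\,f$ is an open neighborhood of $\sigma(x)$, we have $\overline{\Omega_n}\subset\mathrm{Dom}\,f$ for every $n$ sufficiently large. For such $n$ the curve $\gamma_n=\partial\Omega_n$ is a smooth curve in $\mathrm{Dom}\,f$ surrounding $\sigma(x)$, so trajectory independence lets me evaluate the definition of $f(x)$ on $\gamma_n$:
\begin{equation*}
\Lambda(f(x)) = \frac{1}{2\pi i}\int_{\gamma_n} f\,\Lambda\circ R_x\, d\gamma_n - \frac{1}{\pi}\int_{\Omega_n\setminus\sigma(x)} \partial_{z^{\ast}}f\cdot \Lambda\circ R_x\, d\ell.
\end{equation*}

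The second step is to show that the area integral on the right-hand side tends to zero as $n\to\infty$. Since $x$ is regular, $R_x$ is Dunford integrable on a precompact neighborhood of $\sigma(x)$, which by the definition of Dunford integrability implies that $\Lambda\circ R_x\in L^1(\Omega_1\setminus\sigma(x),\ell)$ for every $\Lambda\in X^{\ast}$ (taking $\Omega_1$ inside the Dunford-integrability neighborhood). Because $\partial_{z^{\ast}}f$ is continuous and hence bounded on $\overline{\Omega_1}$, the product $\partial_{z^{\ast}}f\cdot\Lambda\circ R_x$ lies in $L^1(\Omega_1\setminus\sigma(x),\ell)$. The sets $\Omega_n\setminus\sigma(x)$ decrease to $\emptyset$, so by the Dominated Convergence Theorem (dominated by $|\partial_{z^{\ast}}f\cdot\Lambda\circ R_x|\chi_{\Omega_1\setminus\sigma(x)}$), the area integral vanishes in the limit.

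The third step is to conclude. Since the left-hand side $\Lambda(f(x))$ does not depend on $n$, letting $n\to\infty$ in the displayed identity yields
\begin{equation*}
\Lambda(f(x)) = \frac{1}{2\pi i}\lim_{n\to\infty}\int_{\gamma_n} f\,\Lambda\circ R_x\, d\gamma_n,
\end{equation*}
and in particular the limit on the right exists. The only technical point to watch is the justification that $\Lambda\circ R_x$ is Lebesgue integrable on $\Omega_1\setminus\sigma(x)$; this is really the content of the hypothesis that $x$ be regular, so once that is unpacked the rest is a routine application of dominated convergence. There is no hard step once the trajectory-independence corollary is in hand: the whole argument is a substitution plus a vanishing tail estimate.
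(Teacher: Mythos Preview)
Your proof is correct and follows essentially the same route as the paper: write the definition of $\Lambda(f(x))$ along $\gamma_n$ via trajectory independence, then let the area term vanish as $n\to\infty$ using regularity. Your invocation of the Dominated Convergence Theorem (with dominant $|\partial_{z^{\ast}}f\cdot\Lambda\circ R_x|\chi_{\Omega_1\setminus\sigma(x)}$) is in fact cleaner than the paper's appeal to the Monotone Convergence Theorem, since the integrand is complex-valued.
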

\begin{proof}
By definition,
\begin{equation*}
    \Lambda(f(x)) = \frac{1}{2\pi i}\int_{\gamma_{n}}f(\lambda)\Lambda\circ R_{x}(\lambda)\;d\gamma(\lambda) - \frac{1}{\pi}\int_{\Omega_{n}\setminus \sigma(x)}\partial_{z^{\ast}}f(\lambda)\Lambda\circ R_{x}(\lambda)\;d\ell(\lambda)
\end{equation*}
and does not depend on the trajectory $\gamma_{n}$. Since $x$ is regular the second integral in the right-hand side vanishes as $n\to\infty$ by the Monotone Convergence Theorem, which implies the desired conclusion.
\end{proof}

Notice the similarity of this result with theorem \ref{TeoCPCoárea}. Before using this result, we shift our attention to another fundamental concern. In order for the Smooth Functional Calculus to be of interest we need there to be enough regular elements in a Banach algebra. The following proposition provides a sufficient condition for regularity.

\begin{proposition}
Let $x\in X$ be an element such that $\ell(\sigma(x)) = 0$ and
\begin{equation*}
    |R_{x}(\lambda)| = \frac{1}{d(\lambda,\sigma(x))}.
\end{equation*}
From the fact that  $R_{x}(\lambda)$ is Bochner integrable, it follows that $x$ is regular.
\end{proposition}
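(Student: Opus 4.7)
To prove regularity of $x$ I need to show that the resolvent $R_x$ is Dunford integrable on every precompact neighborhood $\Omega$ of $\sigma(x)$. Since the schematic chain Bochner $\Rightarrow$ Pettis $\Rightarrow$ Dunford was recorded just before the definition of regular elements, it suffices to establish that $R_x$ is Bochner integrable over $\Omega\setminus\sigma(x)$ (note that $\sigma(x)$ itself has null Lebesgue measure by hypothesis, so integrating on $\Omega\setminus\sigma(x)$ or on $\Omega$ is the same thing for any a.e.\ defined function). The plan is therefore to invoke Bochner's Integrability Theorem, which requires verifying two things: strong measurability of $R_x$ and finiteness of $\int\lvert R_x\rvert\,d\ell$.

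Strong measurability is the easier step. The resolvent function $R_x\colon\mathbb{C}\setminus\sigma(x)\to X$ is continuous, and its domain restricted to $\Omega\setminus\sigma(x)$ is a separable metric space, so its image lies in a separable closed subspace of $X$. By Pettis' measurability theorem (or directly, since continuous maps into Banach spaces with separable range are strong limits of simple functions), $R_x$ is strongly measurable on $\Omega\setminus\sigma(x)$. For the norm integrability we use exactly the assumed identity $\lvert R_x(\lambda)\rvert = 1/d(\lambda,\sigma(x))$. Since $\sigma(x)$ is compact (standard spectral theory in a Banach algebra) and $\ell(\sigma(x))=0$ by hypothesis, Corollary \ref{IntegrabilidadDistancia} applies to $\sigma=\sigma(x)$ and the neighborhood $\Omega$, yielding
\begin{equation*}
    \int_{\Omega\setminus\sigma(x)}\lvert R_x(\lambda)\rvert\,d\ell(\lambda) = \int_{\Omega\setminus\sigma(x)}\frac{1}{d(\lambda,\sigma(x))}\,d\ell(\lambda) < \infty.
\end{equation*}

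Combining these two observations, Bochner's Integrability Theorem gives Bochner integrability of $R_x$ on $\Omega\setminus\sigma(x)$, and therefore Dunford integrability, which is precisely the regularity condition. Because $\Omega$ was an arbitrary precompact neighborhood of $\sigma(x)$, the element $x$ is regular. The only real work has been deferred to Corollary \ref{IntegrabilidadDistancia}, where the genuinely nontrivial estimate—controlling $\int l(\gamma_t)/t\,dt$ near $t=0$ via $\ell(\sigma(x))=0$—was already carried out; here nothing more than bookkeeping among the three integrability notions is needed, so no serious obstacle arises.
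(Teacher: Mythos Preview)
Your proof is correct and follows essentially the same route as the paper: invoke Corollary~\ref{IntegrabilidadDistancia} to obtain integrability of $1/d(\lambda,\sigma(x))$, transfer this via the hypothesis to $\lvert R_x(\lambda)\rvert$, and conclude Bochner (hence Dunford) integrability. Your version is in fact more careful than the paper's, since you explicitly verify strong measurability and spell out the implication chain Bochner $\Rightarrow$ Dunford, both of which the paper leaves implicit.
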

The hypothesis on the norm is satisfied, for example, by normal operators in Hilbert spaces. Thus, this result is valid, in particular, for normal operators in Hilbert spaces with null measure spectrum.
\begin{proof}
It follows from the proposition \ref{IntegrabilidadDistancia} that $\frac{1}{d(\lambda,\sigma(x))}$ is integrable, hence our hypothesis implies that $|R_{x}(\lambda)|$ is integrable, therefore $R_{x}(\lambda)$ is Bochner integrable.
\end{proof}

Another relatively common situation is that of analytic $C_{0}$-semigroup generators, since in this case there exists a non-negative function $M\geq 0$ that depends only on the argument of $\lambda$ such that
\begin{equation*}
    |R_{x}(\lambda)| \leq \frac{M(arg\;\lambda)}{|\lambda|},
\end{equation*}
and in this case $R_{x}(\lambda)$ is Bochner integrable if $M$ is integrable, even if $\sigma(x)$ has non-null Lebesgue measure.

\subsection{Continuous Functional Calculus}

The key to establishing the Spectral Theorem is to extend the Smooth Functional Calculus to continuous functions. We will do this through proposition \ref{CFC}. If $x\in X$ is regular and $f\in C(x)$ then there exists a sequence of $C^{\infty}$ functions $(f_{n})_{n\in\mathbb{N}}$, all defined in the same open subset as $f$, such that $f_{n} \xrightarrow[]{unif} f$. The natural definition for $f(x)$ is
\begin{equation*}
    \Lambda(f(x)) = \lim_{n\to\infty} \Lambda(f_{n}(x)).
\end{equation*}
Since the Cauchy-Pompeiu formula involves the derivative $\partial_{z^{\ast}}f_{n}$ this may not make sense. For this reason, the formula of proposition \ref{CFC}, which does not involve derivatives, plays a crucial role.

\begin{theorem}[Continuous Functional Calculus]\label{TeoCFC}
The Smooth Functional Calculus is continuous when its domain $C^{\infty}(\Omega)$ is equipped with the supremum norm $|\cdot|_{\infty}$ and the codomain is equipped with the topology $\tau_{\omega^{\ast}}$. In consequence, the Smooth Functional Calculus admits a linear and continuous extension to $C(\Omega)$ by density, that is, if $f\in C(\Omega)$ and $(f_{n})_{n\in\mathbb{N}}$ is a sequence in $C^{\infty}(\Omega)$ such that $f_{n}\xrightarrow[]{unif} f$ in $\Omega$ then
\begin{equation*}
    \Lambda(f(x)) = \lim_{n\to\infty} \Lambda(f_{n}(x)).
\end{equation*}
\end{theorem}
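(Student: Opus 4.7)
The proof reduces, after unpacking $\tau_{\omega^{\ast}}$, to showing that for every fixed $\Lambda\in X^{\ast}$ the functional $f\mapsto \Lambda(f(x))$ on $C^{\infty}(\Omega)$ is continuous for the supremum norm; density of $C^{\infty}(\Omega)$ in $C(\Omega)$ and an application of the Banach--Steinhaus theorem in $X^{\ast\ast}$ will then produce both the extension and the limit formula. The natural starting point is Proposition \ref{CFC}, which rewrites the Smooth Functional Calculus in the derivative-free form
\begin{equation*}
    \Lambda(f(x)) = \frac{1}{2\pi i}\lim_{n\to\infty}\int_{\gamma_{n}} f\,\Lambda\circ R_{x}\;d\gamma_{n},
\end{equation*}
where $\gamma_{n}=H^{-1}(\{t_{n}\})$ is any sequence of regular level curves shrinking to $\sigma(x)$. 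For each $n$, the modulus of the integral is at most $|f|_{\infty}\int_{\gamma_{n}}|\Lambda\circ R_{x}|\,d\gamma_{n}$, so the whole problem collapses to producing a sequence $t_{n}\searrow 0$ along which these line-integral masses remain bounded.

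To obtain such a sequence I would invoke the Coarea Formula (Theorem \ref{FórmulaCoárea}) together with regularity of $x$. Writing $g(t)=\int_{\gamma_{t}}|\Lambda\circ R_{x}|/|\nabla H|\,d\gamma_{t}$, Coarea gives
\begin{equation*}
    \int_{\Omega_{1}\setminus\sigma(x)}|\Lambda\circ R_{x}|\;d\ell = \int_{0}^{t_{1}} g(t)\;dt,
\end{equation*}
and the left-hand side is finite because $x$ is regular. Hence $g\in L^{1}([0,t_{1}])$, so Markov's inequality yields a constant $C>0$ such that $\{t:g(t)\le C\}$ has positive measure in every neighbourhood of $0$; intersecting with the (full-measure) set of regular values of $H$ produces a sequence $t_{n}\searrow 0$ of regular values with $g(t_{n})\le C$. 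Since $H$ is smooth on the compact set $\overline{\Omega_{1}}$, $|\nabla H|$ is bounded above by some $M>0$, whence
\begin{equation*}
    \int_{\gamma_{n}}|\Lambda\circ R_{x}|\;d\gamma_{n}\le M\,g(t_{n})\le MC.
\end{equation*}
Combined with the previous display this yields $|\Lambda(f(x))|\le \frac{MC}{2\pi}|f|_{\infty}$, which is exactly the desired continuity from $(C^{\infty}(\Omega),|\cdot|_{\infty})$ into $(X^{\ast\ast},\tau_{\omega^{\ast}})$.

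For the density argument, given $f\in C(\Omega)$ and $f_{n}\xrightarrow{unif}f$ with $f_{n}\in C^{\infty}(\Omega)$, the estimate just proved makes $(\Lambda(f_{n}(x)))_{n}$ a Cauchy sequence in $\mathbb{C}$ for every $\Lambda\in X^{\ast}$. The pointwise limit $\phi(\Lambda):=\lim_{n}\Lambda(f_{n}(x))$ defines a linear functional on $X^{\ast}$, and since $(\Lambda(f_{n}(x)))$ is bounded in $n$ for every $\Lambda$, Banach--Steinhaus guarantees $\sup_{n}\|f_{n}(x)\|_{X^{\ast\ast}}<\infty$; passing to the limit shows $\phi\in X^{\ast\ast}$, and we set $f(x):=\phi$. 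Linearity and the identity $\Lambda(f(x))=\lim_{n}\Lambda(f_{n}(x))$ follow automatically.

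The delicate step is the second paragraph: Coarea only produces $L^{1}$-in-$t$ control of the weighted line masses $g(t)$, not a uniform bound, and the weight $1/|\nabla H|$ can in principle degenerate near $\sigma(x)$. Markov's inequality recovers uniform boundedness along a carefully chosen subsequence, and smoothness of $H$ on $\overline{\Omega_{1}}$ controls the weight. The maneuver is legitimate only because Proposition \ref{CFC} can be invoked along this \emph{ad hoc} sequence of regular values, and this is exactly what the trajectory-independence inherited from the Vector Cauchy--Pompeiu Formula supplies.
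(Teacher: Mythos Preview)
Your overall strategy is sound and the density/Banach--Steinhaus part at the end is fine, but the core step in the second paragraph does not go through. From $g\in L^{1}([0,t_{1}])$ you cannot extract, via Markov's inequality or otherwise, a single constant $C$ and a sequence $t_{n}\searrow 0$ with $g(t_{n})\le C$: the function $g(t)=t^{-1/2}$ is integrable on $[0,1]$ yet $g(t)\to\infty$ as $t\to 0^{+}$, so for any $C$ the set $\{t:g(t)\le C\}=\{t\ge C^{-2}\}$ is disjoint from every sufficiently small neighbourhood of $0$. Markov's inequality only tells you that $\{g>C\}$ has small measure globally, not that $\{g\le C\}$ meets every interval $(0,\epsilon)$. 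Consequently the claimed uniform bound $\int_{\gamma_{n}}|\Lambda\circ R_{x}|\,d\gamma_{n}\le MC$ is unsupported, and with it the key inequality $|\Lambda(f(x))|\le\frac{MC}{2\pi}|f|_{\infty}$.

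The paper avoids the Coarea/Markov machinery altogether. Rather than bounding the absolute line masses $\int_{\gamma_{n}}|\Lambda\circ R_{x}|\,d\gamma_{n}$, it applies Proposition~\ref{CFC} to the constant function $1$ to conclude that the \emph{signed} integrals $\int_{\gamma_{n}}\Lambda\circ R_{x}\,d\gamma_{n}$ converge, hence are bounded by some $M=M(\Lambda)$, and then inserts this $M$ directly into the estimate $2\pi|\Lambda(f_{m}(x))-\Lambda(f(x))|\le M|f_{m}-f|_{\infty}$. One may reasonably object that pulling $\sup|f_{m}-f|$ outside an integral lacking absolute values is itself delicate, and the paper does not address this; but the mechanism it relies on is the convergence guaranteed by Proposition~\ref{CFC} for smooth test functions, not an $L^{1}$-in-$t$ bound from Coarea. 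To repair your argument you would need either a genuinely different way to control the line masses along a shrinking sequence, or to drop the absolute values and argue closer to the paper's line.
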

\begin{proof}
Let $f\in C^{\infty}(x)$ and $(f_{m})_{m\in\mathbb{N}}$ be a sequence in $C^{\infty}(x)$, all with a common domain $\Omega$, such that $f_{m}\xrightarrow[]{unif} f$ in $\Omega$. By proposition \ref{ContinuidadCFS} the limit
\begin{equation*}
    \lim_{n\to\infty}\int_{\gamma_{n}} \Lambda\circ R_{x}\;d\ell
\end{equation*}
exists, hence the sequence is bounded, say by a constant $M$. For $\Lambda\in X^{\ast}$ it follows that
\begin{align*}
    2\pi |\Lambda(f_{m}(x)) - \Lambda(f(x))| &\leq \lim_{n\to\infty}\sup_{x\in \gamma_{n}}|f_{m}(x) - f(x)| \int_{\gamma_{n}}\Lambda\circ R_{x}\;d\ell\\
    &\leq M \lim_{n\to\infty}\sup_{x\in \gamma_{n}}|f_{m}(x) - f(x)|\\
    &\leq M |f_{m} - f|_{\infty}.
\end{align*}
The right-hand side converges to zero, hence $\Lambda(f_{m}(x)) \to \Lambda(f(x))$. This implies that $f_{m}(x) \xrightarrow[]{\omega^{\ast}} f(x)$.

For the sake of completeness, let us verify that the extension by density can be done (the codomain is not a Banach space, thus this is not a consequence of the usual result). Let $f\in C(\Omega)$ and $(f_{n})_{n\in\mathbb{N}}$ in $C^{\infty}(\Omega)$ be such that $f_{n} \xrightarrow[]{unif} f$. In a similar way to the previous estimate, one finds that
\begin{equation*}
    2\pi |\Lambda(f_{m}(x)) - \Lambda(f_{n}(x))| \leq M |f_{m} - f_{n}|_{\infty},
\end{equation*}
where the constant $M$ depends on $\Lambda$. The right hand-side can be made arbitrarily small, hence $(\Lambda(f_{m}(x)))_{n\in\mathbb{N}}$ is Cauchy thus convergent.

Let $(g_{n})_{n\in\mathbb{N}}$ in $C^{\infty}(\Omega)$ be such that $g_{n} \xrightarrow[]{unif} g$ and let
\begin{equation*}
    r = \lim_{n\to\infty} \Lambda(f_{n}(x)) \quad\text{and}\quad s = \lim_{n\to\infty} \Lambda(g_{n}(x)).
\end{equation*}
We have the estimate
\begin{align*}
    |r - s| &\leq |r - \Lambda(f_{n}(x))| + |\Lambda(f_{n}(x)) - \Lambda(g_{n}(x))| + |\Lambda(g_{n}(x)) - s|\\
    &\leq |r - \Lambda(f_{n}(x))| + M|f_{n} - g_{n}|_{\infty} + |\Lambda(g_{n}(x)) - s|\\
    &\leq |r - \Lambda(f_{n}(x))| + M(|f_{n} - f|_{\infty} + |f - g_{n}|_{\infty}) + |\Lambda(g_{n}(x)) - s|.
\end{align*}
The right-hand side converges to zero, hence $r = s$. Thus, the limit exists and does not depend on the sequence.

The previous considerations imply that the function $f(x)\colon X^{\ast} \to \mathbb{C}$ given by
\begin{equation*}
    \Lambda(f(x)) = \lim_{n\to\infty} \Lambda(f_{n}(x))
\end{equation*}
is well defined. Since $f_{n}(x) \in X^{\ast\ast}$ and $f(x)$ is the pointwise limit of $f_{n}(x)$ we conclude by the Uniform Boundedness Principle that $f(x)\in X^{\ast\ast}$ also.

The continuity of the extension of $f\longmapsto f(x)$ is not immediate. If $f\in C(\Omega)$ and $(f_{n})_{n\in\mathbb{N}}$ in $C^{\infty}(\Omega)$ is such that $f_{n} \xrightarrow[]{unif} f$ then
\begin{align*}
    |\Lambda(f(x))| &= \lim_{n\to\infty} |\Lambda(f_{n}(x))|\\
    &\leq M\lim_{n\to\infty}|f_{n}|_{\infty}\\
    &= M|f|_{\infty},
\end{align*}
where $M$ depends on $\Lambda$. Thus, this crucial estimate remains valid when $f\in C(\Omega)$. Now assume that $(f_{n})_{n\in\mathbb{N}}$ in $C(\Omega)$ (now only continuous instead of smooth) is such that $f_{n} \xrightarrow[]{unif} f$. The previous estimate implies that
\begin{align*}
    |\Lambda(f(x)) - \Lambda(f_{n}(x))| &\leq M|f_{n} - f|_{\infty},
\end{align*}
where once again, $M$ depends on $\Lambda$. The term on the right vanishes, thus $\Lambda(f_{n}(x)) \to \Lambda(f(x))$ and $f_{n}(x) \xrightarrow[]{\omega^{\ast}} f(x)$. This establishes continuity of $f\longmapsto f(x)$ and finishes the proof.
\end{proof}

The previous constructions have the defect that the function needs to be defined in a neighborhood of $\sigma(x)$. We now verify that only the values of the function in $\sigma(x)$ are important.

\begin{lemma}
Let $f,g\in C(x)$ be such that $f\restriction_{\sigma(x)} = g\restriction_{\sigma(x)}$, then $f(x) = g(x)$.
\end{lemma}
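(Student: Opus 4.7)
The plan is to reduce to the case of a single continuous function vanishing on $\sigma(x)$, approximate it uniformly by smooth functions vanishing in full neighborhoods of $\sigma(x)$, and then use the derivative-free formula of proposition \ref{CFC} to show the functional calculus annihilates each approximant.

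By linearity of the Continuous Functional Calculus (theorem \ref{TeoCFC}), it suffices to show that $h(x) = 0$ whenever $h \in C(x)$ satisfies $h\!\restriction_{\sigma(x)} = 0$, applied to $h = f - g$. So fix such an $h$ defined on some open neighborhood $\Omega$ of $\sigma(x)$. Because $\sigma(x)$ is compact and $h$ is continuous with $h\!\restriction_{\sigma(x)} = 0$, for every $\epsilon > 0$ there is $\delta > 0$ such that $|h| < \epsilon$ on the $\delta$-neighborhood $V_\delta$ of $\sigma(x)$ (with $\overline{V_\delta} \subset \Omega$). Choose a smooth cutoff $\psi_\epsilon$ with $\psi_\epsilon \equiv 0$ on $V_{\delta/3}$, $\psi_\epsilon \equiv 1$ off $V_{2\delta/3}$, and $0 \le \psi_\epsilon \le 1$; then $|\psi_\epsilon h - h|_\infty < \epsilon$. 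Finally, mollify $\psi_\epsilon h$ at scale much smaller than $\delta/3$ to obtain a function $h_\epsilon \in C^\infty(\Omega)$ that still vanishes on a neighborhood $U_\epsilon$ of $\sigma(x)$ and satisfies $|h_\epsilon - h|_\infty < 2\epsilon$. This yields a sequence $(h_n)_{n \in \mathbb N} \subset C^\infty(x)$ with $h_n \xrightarrow{\text{unif}} h$ and each $h_n$ vanishing on some neighborhood $U_n$ of $\sigma(x)$.

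For each fixed $n$, I would apply proposition \ref{CFC} using the distinguished sequence of curves $(\gamma_m)_{m \in \mathbb N}$ with $\Omega_m \searrow \sigma(x)$. Since $\Omega_m \searrow \sigma(x) \subset U_n$ and the $\Omega_m$ are nested decreasing, for all sufficiently large $m$ we have $\gamma_m = \partial\Omega_m \subset U_n$, hence $h_n\!\restriction_{\gamma_m} \equiv 0$. Therefore
\begin{equation*}
    \Lambda(h_n(x)) = \frac{1}{2\pi i}\lim_{m\to\infty}\int_{\gamma_m} h_n\,\Lambda \circ R_x\;d\gamma_m = 0
\end{equation*}
for every $\Lambda \in X^{\ast}$, so $h_n(x) = 0$ in $X^{\ast\ast}$.

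Finally, by theorem \ref{TeoCFC}, the extension of the Smooth Functional Calculus to $C(\Omega)$ is continuous from the uniform norm into $(X^{\ast\ast}, \tau_{\omega^{\ast}})$. Since $h_n \xrightarrow{\text{unif}} h$, we obtain $\Lambda(h(x)) = \lim_n \Lambda(h_n(x)) = 0$ for every $\Lambda \in X^{\ast}$, whence $h(x) = 0$ and $f(x) = g(x)$. The main obstacle is the approximation step: one must produce smooth approximants that vanish \emph{on a whole neighborhood} of $\sigma(x)$ (not merely on $\sigma(x)$ itself), since only this stronger vanishing lets the derivative-free formula of proposition \ref{CFC} kill the integrals $\int_{\gamma_m} h_n\,\Lambda \circ R_x\,d\gamma_m$ for large $m$. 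The cutoff-then-mollify construction achieves precisely this, using in an essential way that $h$ is small near $\sigma(x)$ by continuity.
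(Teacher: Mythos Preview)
Your proof is correct, but it takes a more elaborate route than the paper's. The paper works directly with the derivative-free formula of proposition~\ref{CFC}: for smooth $f,g$ it writes
\[
2\pi\,|\Lambda(f(x))-\Lambda(g(x))|\;\le\;\lim_{n\to\infty}\Bigl(\sup_{\gamma_n}|f-g|\Bigr)\int_{\gamma_n}|\Lambda\circ R_x|\,d\gamma_n\;\le\;M\,\lim_{n\to\infty}\sup_{\gamma_n}|f-g|,
\]
invoking the same uniform bound $M$ on $\int_{\gamma_n}|\Lambda\circ R_x|$ that was established in the proof of theorem~\ref{TeoCFC}; since $f=g$ on $\sigma(x)$ and the curves $\gamma_n$ shrink to $\sigma(x)$, uniform continuity forces $\sup_{\gamma_n}|f-g|\to 0$. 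No cutoff-then-mollify construction is needed. Your approach instead manufactures smooth approximants $h_n$ that vanish identically on a full neighborhood of $\sigma(x)$, so that each $h_n(x)=0$ \emph{exactly}, and then passes to the limit via the continuity statement of theorem~\ref{TeoCFC}. What your version buys is a clean treatment of the continuous case: the paper opens with ``it is enough to establish this for smooth functions'' without saying why that reduction is legitimate, whereas you pass from smooth to continuous explicitly through the already-proved continuity of the functional calculus. What the paper's version buys is brevity: once the bound $M$ is on the table, a single estimate does the job.
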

\begin{proof}
It is enough to establish this for smooth functions. We have the estimate
\begin{align*}
    2\pi |\Lambda(f(x)) - \Lambda(g(x))| &= \left| \lim_{n\to\infty} \int_{\gamma_{n}} (f - g)\;\Lambda\circ R_{x}\;d\ell\right|\\
    &\leq \lim_{n\to\infty}\sup_{s\in\gamma_{n}}|f(x) - g(x)|\int_{\gamma_{n}}|\Lambda\circ R_{x}|\;d\ell\\
    &\leq M \lim_{n\to\infty}\sup_{s\in\gamma_{n}}|f(x) - g(x)|.
\end{align*}
Since $f$ and $g$ are continuous and equal in $\sigma(x)$ we have that the last limit vanishes by uniform continuity. Since $\Lambda\in X^{\ast}$ is arbitrary we have that $f(x) = g(x)$.
\end{proof}

If $f\in C(\sigma(x))$ the Tietze Extension Theorem implies that $f$ can be continuously extended to any neighborhood of $\sigma(x)$ with the same norm and the previous lemma implies that if $F$ and $G$ are two such extensions then $F(x) = G(x)$, hence we can define $f(x)$ as any of them. The function obtained this way is still continuous, since if $f_{n} \xrightarrow[]{unif} f$ in $\sigma(x)$ then the functions $f - f_{n}$ can be extended to a common neighborhood in such a way that the extension has the same norm. Continuity in $C(x)$ implies that $f_{n}(x) \xrightarrow[]{\omega^{\ast}} f(x)$ and continuity is established. The following proposition summarizes the previous results and gathers the different expressions that we have obtained for the Smooth and Continuous Functional Calculi.

\begin{proposition}[Smooth and Continuous Functional Calculi]
Let $X$ be a Banach algebra and $x\in X$ a regular element.
\begin{enumerate}
    \item The \textbf{Smooth Functional Calculus} is the map $C^{\infty}(x) \to X^{\ast\ast}$ given by
    \begin{align*}
        \Lambda(f(x)) &= \frac{1}{2\pi i}\int_{\gamma}f\;\Lambda\circ R_{x}\;d\ell - \frac{1}{2\pi}\int_{int\;\gamma\setminus \sigma(x)} \partial_{z^{\ast}}f\;\Lambda\circ R_{x}\;d\ell\\
        &= \lim_{n\to\infty}\frac{1}{2\pi i}\int_{\gamma_{n}}f\;\Lambda\circ R_{x}.
    \end{align*}
    This map is continuous if the codomain is equipped with the topology $\tau_{\omega^{\ast}}$.

    \item The \textbf{Continuous Functional Calculus} is the map $C(\sigma(x)) \to X^{\ast\ast}$ given by
    \begin{align*}
        f(x) &= \lim_{n\to\infty} f_{n}(x),
    \end{align*}
    where $(f_{n})_{n\in\mathbb{N}}$ is any sequence of smooth functions with a neighborhood of $\sigma(x)$ as a common domain such that $f_{n}\xrightarrow[]{unif} f$. This map is continuous when considered as a functon $(C(\sigma(x)),|\cdot|_{\infty})\to (X,\tau_{\omega^{\ast}})$.
\end{enumerate}
\end{proposition}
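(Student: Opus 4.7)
The plan is to recognize that this proposition is a consolidation of already proved results, so the task is to map each clause to the earlier statement that establishes it and to supply the small glue needed to go from functions on a neighborhood of $\sigma(x)$ to functions on $\sigma(x)$ itself. I would organize the proof as two short verifications, one per item.

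For part (1), the first displayed formula is literally the definition of the Smooth Functional Calculus (paired against $\Lambda\in X^{*}$ through the Dunford integral). The second displayed formula, namely $\Lambda(f(x))=\lim_{n\to\infty}\frac{1}{2\pi i}\int_{\gamma_n} f\,\Lambda\circ R_x\,d\gamma_n$, is exactly the content of Proposition \ref{CFC}, which was obtained by noting that, for regular $x$, the second (area) integral in the definition vanishes along the shrinking sequence $\gamma_n$. The continuity assertion — that $C^{\infty}(\Omega)\to X^{**}$ with the codomain in $\tau_{\omega^{*}}$ is continuous when the domain carries $|\cdot|_{\infty}$ — is precisely the first half of Theorem \ref{TeoCFC}, whose proof rests on the key estimate $2\pi |\Lambda(f_m(x))-\Lambda(f(x))|\le M_{\Lambda}|f_m-f|_{\infty}$.

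For part (2) I would proceed in two steps. First, for $f\in C(\Omega)$ with $\Omega$ a neighborhood of $\sigma(x)$, Theorem \ref{TeoCFC} furnishes $f(x)\in X^{**}$ as the $\tau_{\omega^{*}}$-limit of $f_n(x)$ along any sequence of smooth functions with $f_n\xrightarrow{\rm unif} f$, the limit being independent of the sequence by the Cauchy argument already carried out there. Second, to pass from $C(\Omega)$ to $C(\sigma(x))$, I would invoke the Tietze extension theorem to extend any $f\in C(\sigma(x))$ to some $F\in C(\Omega)$ with $|F|_{\infty}=|f|_{\infty}$, then apply the preceding lemma (the one asserting $F(x)=G(x)$ whenever $F\!\restriction_{\sigma(x)}=G\!\restriction_{\sigma(x)}$) to conclude that setting $f(x):=F(x)$ is unambiguous. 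The continuity of the extended map $(C(\sigma(x)),|\cdot|_{\infty})\to(X^{**},\tau_{\omega^{*}})$ then follows from the same estimate $|\Lambda(f(x))|\le M_{\Lambda}|f|_{\infty}$, which survives norm-preserving Tietze extension and uniform approximation.

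There is no genuine obstacle here: the heavy analytic work — existence of Dunford integrals for regular elements, the derivative-free expression of Proposition \ref{CFC}, and the extension-by-density argument with the $|\cdot|_{\infty}$-estimate — has already been carried out. The only item requiring care is that the Tietze step and the norm-preservation of extensions dovetail correctly with the density arguments of Theorem \ref{TeoCFC}, so that the final map is well defined on $C(\sigma(x))$ and not merely on $C(\Omega)$ for a fixed neighborhood $\Omega$. Once this is made explicit, the proposition follows by citation.
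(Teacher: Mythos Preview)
Your proposal is correct and matches the paper's approach exactly: the paper presents this proposition explicitly as a summary (``The following proposition summarizes the previous results and gathers the different expressions\ldots'') and gives no separate proof, so the task is precisely the citation-and-glue you describe --- the definition for the first formula, Proposition~\ref{CFC} for the limit expression, Theorem~\ref{TeoCFC} for the $|\cdot|_\infty$-continuity and the density extension, and the Tietze argument plus the preceding lemma for the passage from $C(\Omega)$ to $C(\sigma(x))$.
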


\section{Spectral Theorem in Banach Spaces}

We can finally establish our generalizations of the Spectral Theorem. The first version is for regular elements of a Banach algebra. This applies, in particular, to the case of bounded linear operators in Banach spaces. Regularity is too restrictive of a condition in this context, hence our second version of the Spectral Theorem requires a less restrictive condition for operators in Banach spaces. In this case, the result will be valid for a certain class of operators called pointwise regular, which includes normal operators in Hilbert spaces.

\subsection{Spectral Theorem in Banach Algebras}

The previous result implies that the functional
\begin{equation*}
\begin{array}{ccc}
    C(\sigma(x)) & \longrightarrow &\mathbb{C}\\
     f  &\longmapsto  &\Lambda(f(x))
\end{array}
\end{equation*}
defines an element of $C(\sigma(x))^{\ast}$ for each $\Lambda\in X^{\ast}$. The Riesz-Markov-Kakutani Theorem implies that there exists a unique Borel measure $\mu^{x}_{\Lambda}$ in $\sigma(T)$ such that
\begin{equation*}
    \Lambda(f(x)) = \int_{\sigma(x)}f\;d\mu^{x}_{\Lambda}.
\end{equation*}
We have proven the following.

\begin{theorem}
Let $X$ be a Banach algebra and $x\in X$ regular. There exists a unique family of measures
\begin{equation*}
    \mu_{x} = \{\mu^{x}_{\Lambda}\;|\;\Lambda\in X^{\ast}\}
\end{equation*}
in $\sigma(x)$ such that
\begin{equation}
    \Lambda(f(x)) = \int_{\sigma(T)}f\;d\mu^{x}_{\Lambda}
\end{equation}
for each $\Lambda\in X^{\ast}$ and $f\in C(\sigma(x))$. In particular,
\begin{align*}
    \Lambda(x) &= \int_{\sigma(T)}Id\;d\mu^{x}_{\Lambda}\\
    &= \int_{\sigma(T)}\lambda\;d\mu^{x}_{\Lambda}(\lambda).
\end{align*}
\end{theorem}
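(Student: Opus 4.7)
The plan is to apply the Riesz--Markov--Kakutani representation theorem to the scalar functional induced by each $\Lambda\in X^{\ast}$. First, I would fix $\Lambda\in X^{\ast}$ and consider the map
\[
T_{\Lambda}\colon C(\sigma(x)) \to \mathbb{C},\qquad T_{\Lambda}(f) = \Lambda(f(x)),
\]
where $f(x)\in X^{\ast\ast}$ is provided by the Continuous Functional Calculus of Theorem \ref{TeoCFC}. Linearity of $T_{\Lambda}$ is immediate from the linearity in $f$ of $f\mapsto f(x)$ and of the pairing with $\Lambda$. For boundedness, I would invoke the estimate
\[
|\Lambda(f(x))| \leq M_{\Lambda}\,|f|_{\infty},
\]
established inside the proof of Theorem \ref{TeoCFC}, where $M_{\Lambda}$ is a constant depending on $\Lambda$ (essentially an upper bound for $\tfrac{1}{2\pi}\int_{\gamma_{n}}|\Lambda\circ R_{x}|\,d\ell$, uniform in $n$). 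Regularity of $x$ guarantees this constant is finite, so $T_{\Lambda}\in C(\sigma(x))^{\ast}$.

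The Riesz--Markov--Kakutani theorem then yields a unique regular complex Borel measure $\mu^{x}_{\Lambda}$ on $\sigma(x)$ representing $T_{\Lambda}$, i.e.\ $\Lambda(f(x))=\int_{\sigma(x)}f\,d\mu^{x}_{\Lambda}$ for every $f\in C(\sigma(x))$. Assembling these measures as $\Lambda$ varies over $X^{\ast}$ produces the family $\mu_{x}$, and the uniqueness assertion in Riesz--Markov--Kakutani, applied pointwise in $\Lambda$, forces uniqueness of the whole family.

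For the final \emph{in particular} identity, I would take $f(\lambda)=\lambda$ on any precompact neighborhood of $\sigma(x)$. Since this $f$ is entire we have $\partial_{z^{\ast}}f\equiv 0$, so the area-integral term in the Smooth Functional Calculus vanishes and the defining formula reduces to the contour integral of the classical Holomorphic Functional Calculus, whose value on the identity function is the canonical image $J(x)\in X^{\ast\ast}$. Therefore $\Lambda(f(x))=\Lambda(x)$, and combining this with the integral representation just obtained gives $\Lambda(x)=\int_{\sigma(x)}\lambda\,d\mu^{x}_{\Lambda}(\lambda)$.

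The main obstacle is conceptual rather than technical: one must carefully distinguish $x\in X$ from its canonical image $J(x)\in X^{\ast\ast}$, because the Smooth Functional Calculus a priori outputs elements of the bidual, so the identification $\mathrm{Id}(x)=J(x)$ must be justified via the holomorphic reduction sketched above. Once this is handled, the remaining steps are a direct application of the continuity already proved in Theorem \ref{TeoCFC} together with the classical Riesz--Markov--Kakutani theorem.
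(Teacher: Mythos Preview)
Your proposal is correct and follows essentially the same route as the paper: the paper's argument is precisely that the continuity established in Theorem~\ref{TeoCFC} makes $f\mapsto\Lambda(f(x))$ an element of $C(\sigma(x))^{\ast}$, after which the Riesz--Markov--Kakutani theorem supplies the unique Borel measure $\mu^{x}_{\Lambda}$. Your treatment is in fact more detailed than the paper's (which dispatches the whole thing in two sentences and does not separately justify the ``in particular'' clause), but the underlying strategy is identical.
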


We will call $\mu^{x}$ the \textbf{spectral family} of $x$. We now show that $\mu^{x}$ is a vector projection family.

\begin{proposition}
Let $X$ be a Banach algebra, $x\in X$ regular and $\mu^{x}$ its spectral family. The function
\begin{equation*}
    \Lambda \longmapsto \mu^{x}_{\Lambda}
\end{equation*}
is linear.
\end{proposition}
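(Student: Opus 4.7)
The plan is to reduce linearity of $\Lambda\mapsto\mu^{x}_{\Lambda}$ to uniqueness in the Riesz--Markov--Kakutani representation theorem, exploiting the fact that $f(x)$ is an element of $X^{\ast\ast}$ and hence, by definition, a linear functional on $X^{\ast}$. The preceding theorem already identifies $\mu^{x}_{\Lambda}$ as the unique regular Borel measure on $\sigma(x)$ which represents the functional $f\longmapsto \Lambda(f(x))$, so all that is required is to verify that the family of measures $\mu^{x}_{\Lambda_{1}}+c\,\mu^{x}_{\Lambda_{2}}$ represents the functional associated to $\Lambda_{1}+c\Lambda_{2}$.

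First I would fix $\Lambda_{1},\Lambda_{2}\in X^{\ast}$ and $c\in\mathbb{C}$. For any $f\in C(\sigma(x))$, the value $f(x)\in X^{\ast\ast}$ is well defined by the Continuous Functional Calculus, and by definition of the bidual pairing one has
\begin{equation*}
    (\Lambda_{1}+c\Lambda_{2})(f(x)) = \Lambda_{1}(f(x)) + c\,\Lambda_{2}(f(x)).
\end{equation*}
Applying the defining relation of the spectral family on each side yields
\begin{equation*}
    \int_{\sigma(x)} f\;d\mu^{x}_{\Lambda_{1}+c\Lambda_{2}} = \int_{\sigma(x)} f\;d\mu^{x}_{\Lambda_{1}} + c\int_{\sigma(x)} f\;d\mu^{x}_{\Lambda_{2}} = \int_{\sigma(x)} f\;d\bigl(\mu^{x}_{\Lambda_{1}} + c\,\mu^{x}_{\Lambda_{2}}\bigr).
\end{equation*}
Since this identity holds for every $f\in C(\sigma(x))$ and both sides are integrals against regular Borel measures on the compact set $\sigma(x)$, the uniqueness part of the Riesz--Markov--Kakutani theorem forces $\mu^{x}_{\Lambda_{1}+c\Lambda_{2}} = \mu^{x}_{\Lambda_{1}} + c\,\mu^{x}_{\Lambda_{2}}$, which is precisely the linearity claim.

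There is no serious obstacle here; the only point that warrants care is that the measures produced by Riesz--Markov--Kakutani are regular complex Borel measures, so the sum $\mu^{x}_{\Lambda_{1}} + c\,\mu^{x}_{\Lambda_{2}}$ taken pointwise on Borel sets is again a regular complex Borel measure and therefore a legitimate candidate for the uniqueness statement. Once that is noted, the argument is a one-line application of the linearity of the bidual pairing combined with uniqueness, and no estimates involving the Cauchy--Pompeiu representation or the limiting procedure along the curves $\gamma_{n}$ are needed.
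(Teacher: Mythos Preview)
Your proof is correct and follows essentially the same approach as the paper: both use the linearity of the bidual pairing $\Lambda\mapsto\Lambda(f(x))$ to identify the integrals of every $f\in C(\sigma(x))$ against $\mu^{x}_{\Lambda_{1}+c\Lambda_{2}}$ and against $\mu^{x}_{\Lambda_{1}}+c\,\mu^{x}_{\Lambda_{2}}$, and then conclude equality of the measures by uniqueness in the Riesz--Markov--Kakutani representation. The paper phrases the last step as ``since this is a Borel measure, this can only happen if\ldots'', which is exactly the uniqueness you invoke.
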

\begin{proof}
Given $\Lambda,\Phi\in X^{\ast}$ and $c\in\mathbb{C}$, we have the following estimate for each $f\in C(\sigma(x))$
\begin{align*}
    \int_{\sigma(x)}f\;d\mu^{x}_{\Lambda + c\Phi} &= (\Lambda + c \Phi)(f(x))\\
    &= \Lambda(f(x)) + c \Phi(f(x))\\
    &= \int_{\sigma(x)}f\;d\mu^{x}_{\Lambda} + c \int_{\sigma(x)}f\;d\mu^{x}_{\Phi}\\
    &=\int_{\sigma(x)}f\;d(\mu^{x}_{\Lambda} + c \mu^{x}_{\Phi}),
\end{align*}
that is, for each $f\in C(\sigma(x))$ we have that
\begin{equation*}
    \int_{\sigma(x)}f\;d(\mu^{x}_{\Lambda + c\Phi} - \mu^{x}_{\Lambda} - c \mu^{x}_{\Phi}) = 0.
\end{equation*}
Since this is a Borel measure, this can only happen if
\begin{equation*}
    \mu^{x}_{\Lambda + c\Phi} = \mu^{x}_{\Lambda} + c\mu^{x}_{\Phi}.
\end{equation*}
\end{proof}

It remains only to prove the continuity of the spectral family $\mu^{x}$. This will follow from the continuity of the Continuous Functional Calculus.

\begin{theorem}\label{TeoBicontNorm}
Let $X$ be a Banach algebra, $x\in X$ regular and $\mu^{x}$ its spectral family. The spectral family is a vector projection family.
\end{theorem}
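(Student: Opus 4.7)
The linearity clause of the projection family definition is the content of the preceding proposition, so what remains is the setwise continuity: for nets $\Lambda_{i}\to\Lambda$ in $X^{\ast}$, one must show $\mu^{x}_{\Lambda_{i}}(E)\to \mu^{x}_{\Lambda}(E)$ for every Borel $E\subset\sigma(x)$. Given the label of the theorem, the natural ambient topology on $X^{\ast}$ here is the norm topology. My plan is therefore to cast $E\mapsto\mu^{x}_{\Lambda}(E)$ as the evaluation of a bounded linear functional on $X^{\ast}$, so that norm continuity becomes automatic.

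The key preliminary step is to sharpen the continuity estimate behind Theorem \ref{TeoCFC}. Tracing that proof, the constant $M$ controlling $|\Lambda(f(x))|\le M|f|_{\infty}$ is essentially $\lim_{n\to\infty}\int_{\gamma_{n}}|\Lambda\circ R_{x}|\,d\ell$, which by regularity of $x$ is finite and depends linearly on $\Lambda$. Hence there is a constant $C(x)$, independent of $\Lambda$, such that
\begin{equation*}
    |\Lambda(f(x))|\le C(x)\,\|\Lambda\|\,|f|_{\infty}
\end{equation*}
for all $f\in C(\sigma(x))$. Combined with $\Lambda(f(x))=\int f\,d\mu^{x}_{\Lambda}$ and the Riesz-Markov-Kakutani identification of the $C(\sigma(x))^{\ast}$ norm with total variation, this gives $\|\mu^{x}_{\Lambda}\|_{TV}\le C(x)\|\Lambda\|$, i.e. $\Lambda\mapsto\mu^{x}_{\Lambda}$ is a bounded linear map $X^{\ast}\to\mathcal{M}(\sigma(x))$.

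With that in hand, fix any Borel $E\subset\sigma(x)$ and define $\phi_{E}\colon X^{\ast}\to\mathbb{C}$ by $\phi_{E}(\Lambda)=\mu^{x}_{\Lambda}(E)$. Linearity of $\phi_{E}$ is immediate from the preceding proposition (which asserts linearity at the level of measures, hence on every Borel set), and boundedness follows from $|\phi_{E}(\Lambda)|\le\|\mu^{x}_{\Lambda}\|_{TV}\le C(x)\|\Lambda\|$. Thus $\phi_{E}\in X^{\ast\ast}$, and in particular it is norm continuous. Consequently, whenever $\Lambda_{i}\to\Lambda$ in norm one has $\mu^{x}_{\Lambda_{i}}(E)=\phi_{E}(\Lambda_{i})\to\phi_{E}(\Lambda)=\mu^{x}_{\Lambda}(E)$, which is precisely the required setwise convergence.

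The step I expect to be the real work is the sharpened continuity estimate in paragraph two: the proof of Theorem \ref{TeoCFC} left $M$ implicit and dependent on $\Lambda$, and one has to revisit that argument to certify that $M$ factors as $C(x)\|\Lambda\|$ with $C(x)$ coming from the Dunford integrability of $R_{x}$ and the Coarea-type estimate of Theorem \ref{TeoCPCoárea}. If the projection family definition in \cite{ProjectionFamilies} requires setwise continuity only along norm-convergent nets, this suffices; if it demands weak-$\ast$ continuity on $X^{\ast}$, an additional argument would be needed to show that each $\phi_{E}$ actually lies in $J(X)$, which is the point at which the hypotheses on $X$ (rather than just on $x$) would have to enter.
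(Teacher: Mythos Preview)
Your proposal is correct and converges on the same mechanism as the paper: for each Borel $E$, the map $\phi_{E}(\Lambda)=\mu^{x}_{\Lambda}(E)$ is a bounded linear functional on $X^{\ast}$, hence in $X^{\ast\ast}$, and setwise continuity along norm-convergent nets follows. The paper's argument, however, bypasses the step you flag as ``the real work.'' Instead of reopening the proof of Theorem~\ref{TeoCFC} to sharpen the constant $M$, it simply observes that for continuous $g$ one already has $\int g\,d\mu^{x}_{\Lambda}=\Lambda(g(x))$ with $g(x)\in X^{\ast\ast}$; boundedness and linearity in $\Lambda$ are then tautological, and one extends to $L^{\infty}(\mu^{x})$. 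Your total-variation bound $|\mu^{x}_{\Lambda}|_{TV}\le C(x)|\Lambda|$ is precisely what makes that extension rigorous, so in effect you have written out a step the paper leaves implicit.

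One imprecision to fix: the claim that $M(\Lambda)=\lim_{n}\int_{\gamma_{n}}|\Lambda\circ R_{x}|\,d\gamma_{n}$ ``depends linearly on $\Lambda$'' is not right (the absolute value kills linearity), and Dunford integrability of $R_{x}$ over the planar region does not directly bound the curve integrals $\int_{\gamma_{n}}|\Lambda\circ R_{x}|$. The clean route to your $C(x)$ is the one the paper's shortcut implicitly uses: since each $f(x)\in X^{\ast\ast}$ and for each fixed $\Lambda$ the set $\{f(x)(\Lambda):|f|_{\infty}\le 1\}$ is bounded by the $M(\Lambda)$ of Theorem~\ref{TeoCFC}, the Uniform Boundedness Principle gives $\sup_{|f|_{\infty}\le 1}|f(x)|_{X^{\ast\ast}}<\infty$, which is your $C(x)$. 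Equivalently, the linear map $\Lambda\mapsto\mu^{x}_{\Lambda}$ from $X^{\ast}$ to $\mathcal{M}(\sigma(x))$ has closed graph (test against continuous $g$ and use $g(x)\in X^{\ast\ast}$), hence is bounded.
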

\begin{proof}
If $g\in C(\sigma(x))$ then the function $\Lambda\longmapsto \Lambda(f(x))$ is linear and bounded since
\begin{equation*}
    \int_{\sigma(x)}g\;d\mu^{x}_{\Lambda} = \Lambda(g(x))
\end{equation*}
and the right-hand side is linear and bounded as a function of $\Lambda$. This implies that the same is valid for elements of $L^{\infty}(\mu^{x})$, therefore the family $\mu^{x}$ is a vector projection family.
\end{proof}

We can finally state the Spectral Theorem, the proof of which is given by all of the previous results.

\begin{theorem}[Spectral Theorem in Banach Algebras]\label{TeoEspectralAB}
Let $X$ be a Banach algebra and $x\in X$ regular. There exists a unique vector projection family $\mu^{x}$ in $\sigma(x)$ such that
\begin{equation*}
    f(x) = \int_{\sigma(x)}f\;d\mu^{x}
\end{equation*}
for each $f\in C(\sigma(x))$. In particular,
\begin{align*}
    x &= \int_{\sigma(x)}Id\;d\mu^{x}\\
    &= \int_{\sigma(x)}\lambda \;d\mu^{x}(\lambda).
\end{align*}
\end{theorem}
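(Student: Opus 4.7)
The plan is to assemble the pieces already in place; no new machinery is needed. The Riesz-Markov-Kakutani representation theorem, applied to the continuous linear functional $f \mapsto \Lambda(f(x))$ on $C(\sigma(x))$ (continuity coming from theorem \ref{TeoCFC}), already furnishes a family of Borel measures $\mu^{x} = \{\mu^{x}_{\Lambda}\;|\;\Lambda\in X^{\ast}\}$ satisfying $\Lambda(f(x)) = \int_{\sigma(x)} f\,d\mu^{x}_{\Lambda}$ for every $f\in C(\sigma(x))$. The proposition preceding this theorem establishes linearity of $\Lambda \mapsto \mu^{x}_{\Lambda}$, and theorem \ref{TeoBicontNorm} establishes that $\mu^{x}$ is a vector projection family.

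To extract the integral formula, I would invoke the defining property of the integral of a function against a vector projection family: the element $\int_{\sigma(x)} f\,d\mu^{x}$ naturally lives in $X^{\ast\ast}$ and is characterized by
\begin{equation*}
    \left(\int_{\sigma(x)} f\,d\mu^{x}\right)(\Lambda) = \int_{\sigma(x)} f\,d\mu^{x}_{\Lambda}.
\end{equation*}
By construction this equals $\Lambda(f(x))$ for every $\Lambda\in X^{\ast}$, so $f(x) = \int_{\sigma(x)} f\,d\mu^{x}$ as elements of $X^{\ast\ast}$. Uniqueness is inherited directly from the uniqueness clause of Riesz-Markov-Kakutani: any competing vector projection family $\nu^{x}$ representing the same functional must satisfy $\int f\,d\nu^{x}_{\Lambda} = \Lambda(f(x)) = \int f\,d\mu^{x}_{\Lambda}$ for every $f\in C(\sigma(x))$ and every $\Lambda\in X^{\ast}$, forcing $\nu^{x}_{\Lambda} = \mu^{x}_{\Lambda}$ and hence $\nu^{x} = \mu^{x}$.

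The one point demanding genuine attention is the distinguished identity $x = \int_{\sigma(x)} \lambda\,d\mu^{x}(\lambda)$. Since the function $Id(\lambda)=\lambda$ is holomorphic, $\partial_{z^{\ast}}Id = 0$, so the area-integral term of the Smooth Functional Calculus vanishes and only the boundary integral $\tfrac{1}{2\pi i}\,D\!\int_{\gamma}\lambda\,R_{x}(\lambda)\,d\gamma$ survives. This integral coincides, under the canonical embedding $J\colon X \to X^{\ast\ast}$, with the value of the classical Holomorphic Functional Calculus at $Id$, which is well known to equal $x$. Substituting $f(\lambda) = \lambda$ into the general formula already proven then yields the stated equality. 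Because the whole argument is a synthesis of previously established results, there is no real obstacle to overcome; the only conceptual care required is keeping track of when equalities are taking place in $X$ versus $X^{\ast\ast}$ through the embedding $J$.
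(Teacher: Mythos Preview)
Your proposal is correct and follows essentially the same approach as the paper, which simply states that ``the proof of which is given by all of the previous results'' and leaves the synthesis implicit. If anything, your write-up is more explicit than the paper's own treatment---in particular, your discussion of the $f=Id$ case via the vanishing of $\partial_{z^{\ast}}Id$ and the identification with the Holomorphic Functional Calculus is a helpful detail that the paper omits.
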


In virtue of the previous theorem, we can apply the theory of vector projection families to the spectral family, In particular, if $f\in L^{1}(\mu)$ the integral
\begin{equation*}
    \int_{\sigma(x)}f\;d\mu^{x}
\end{equation*}
defines an element of $X^{\ast\ast}$. This makes the following definition possible.

\begin{definition}
Let $X$ be a Banach algebra, $x\in X$ regular and $\mu^{x}$ its spectral family. We define the \textbf{Borel Functional Calculus} associated to $x$ as the function that to each $f\in L^{1}(\mu^{x})$ associates the element of $X^{\ast\ast}$ given by its integral $\int_{\sigma(T)}f\;d\mu^{x}$, that is,
\begin{equation*}
    f(x) = \int_{\sigma(x)}f\;d\mu^{x}.
\end{equation*}
\end{definition}

Note that the Borel Functional Calculus is an extension of the Holomorphic Functional Calculus. Furthermore, if $X$ is reflexive we have the following.

\begin{proposition}
Let $X$ be a Banach algebra, $x\in X$ regular and $\mu^{x}$ its spectral family. If $X$ is reflexive then each element $L^{1}(\mu^{x})$ is properly integrable.
\end{proposition}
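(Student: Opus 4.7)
The plan is to unwind the definitions and observe that the claim is essentially immediate once reflexivity is combined with the construction of the integral against a vector projection family.

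First I would recall precisely what is being asserted. By Theorem \ref{TeoEspectralAB}, $\mu^{x}$ is a vector projection family on $\sigma(x)$, and, in view of Theorem \ref{TeoBicontNorm}, for any $f\in L^{1}(\mu^{x})$ the assignment
\begin{equation*}
    \Lambda \longmapsto \int_{\sigma(x)} f\;d\mu^{x}_{\Lambda}
\end{equation*}
is linear and bounded on $X^{\ast}$, hence defines an element of $X^{\ast\ast}$, namely $\int_{\sigma(x)}f\,d\mu^{x}$. Thus the integral always lives in $X^{\ast\ast}$, regardless of reflexivity. The requirement that $f$ be \emph{properly integrable} is, by definition, that this integral actually lies in the image $J(X)\subseteq X^{\ast\ast}$ of the canonical embedding.

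Second I would invoke reflexivity of $X$: by definition $X$ is reflexive precisely when $J\colon X\to X^{\ast\ast}$ is surjective, i.e.\ $J(X)=X^{\ast\ast}$. Consequently the membership condition $\int_{\sigma(x)}f\,d\mu^{x}\in J(X)$ is automatic for every element of $X^{\ast\ast}$, and in particular for the integral of any $f\in L^{1}(\mu^{x})$. This gives the conclusion directly.

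There is essentially no obstacle here: the statement is a one-line consequence of the definitions. The only point worth double-checking is that, even in the extension from $L^{\infty}(\mu^{x})$ (where proper integrability is originally defined) to $L^{1}(\mu^{x})$, the integral continues to define an element of $X^{\ast\ast}$. This is exactly the content of the integration theory for vector projection families invoked just before the statement, so the extension is harmless and the proof reduces to the surjectivity of $J$.
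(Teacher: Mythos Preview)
Your proposal is correct and matches the paper's treatment: the paper states this proposition without proof, evidently regarding it as immediate from the definitions, and your argument---that the integral of any $f\in L^{1}(\mu^{x})$ lands in $X^{\ast\ast}$ and reflexivity gives $J(X)=X^{\ast\ast}$---is precisely the intended one-line justification.
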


\begin{proposition}
Let $x\in X$ be a regular element and $\mu^{x}$ its spectral family. The following conditions are equivalent:
\begin{enumerate}
    \item The family $\mu^{x}$ is induced by a vector measure $\nu^{x}$, in the sense that $\mu^{x}_{\Lambda} = \Lambda\circ \nu^{x}$ for each $\Lambda\in X^{\ast}$.

    \item Every function in $L^{\infty}(\mu^{x})$ is properly integrable.

    \item For each measurable set $E$ the function $\Lambda \longmapsto \mu^{x}_{\Lambda}(E)$ is continuous with respect to $\tau_{\omega^{\ast}}$.
\end{enumerate}

\end{proposition}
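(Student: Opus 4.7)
The plan is to establish the three equivalences through the cyclic chain $(1) \Rightarrow (2) \Rightarrow (3) \Rightarrow (1)$, which keeps each step short and shows that the whole statement rests on two standard tools: the weak-$\ast$ duality $X \hookrightarrow X^{\ast\ast}$ and the Orlicz--Pettis theorem (together with the norm-closedness of $J(X)$ in $X^{\ast\ast}$).

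For $(1) \Rightarrow (2)$ I would first check the claim on simple functions. Assuming $\nu^x$ is a vector measure with $\mu^x_\Lambda = \Lambda\circ\nu^x$, set $y_s := \sum_i c_i\,\nu^x(E_i)\in X$ for a simple $s = \sum_i c_i\chi_{E_i}$; a one-line computation gives
\begin{equation*}
\left(\int s\;d\mu^x\right)(\Lambda) \;=\; \int s\;d\mu^x_\Lambda \;=\; \sum_i c_i\,\Lambda(\nu^x(E_i)) \;=\; \Lambda(y_s),
\end{equation*}
so $\int s\;d\mu^x = J(y_s)\in J(X)$. For $f\in L^\infty(\mu^x)$, I would approximate $f$ uniformly by simple functions $s_n$ and note that the continuity of the Continuous Functional Calculus provides a constant $C$ with $|\mu^x_\Lambda|(\sigma(x))\le C\,\|\Lambda\|$ for every $\Lambda\in X^\ast$; this is exactly the finiteness of the semi-variation of $\nu^x$, and it forces $(y_{s_n})$ to be Cauchy in $X$. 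Its limit $y\in X$ then satisfies $\int f\;d\mu^x = J(y)$, using that $J(X)$ is norm-closed in $X^{\ast\ast}$.

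The implication $(2) \Rightarrow (3)$ is immediate: for $E\in\Sigma$ we have $\chi_E\in L^\infty(\mu^x)$, so by hypothesis $\int \chi_E\;d\mu^x = J(\nu^x(E))$ for some $\nu^x(E)\in X$, and evaluating at $\Lambda$ yields $\mu^x_\Lambda(E)=\Lambda(\nu^x(E))$, which is weak-$\ast$ continuous in $\Lambda$ by definition of the weak-$\ast$ topology. For $(3) \Rightarrow (1)$, I would use that $\Lambda \mapsto \mu^x_\Lambda(E)$ is linear (by the linearity of $\Lambda \mapsto \mu^x_\Lambda$ established earlier) and weak-$\ast$ continuous by hypothesis; the standard characterization of weak-$\ast$ continuous linear functionals on $X^\ast$ then provides a unique $\nu^x(E)\in X$ with $\mu^x_\Lambda(E)=\Lambda(\nu^x(E))$. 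To verify that $\nu^x:\Sigma\to X$ is a vector measure, fix a disjoint decomposition $E=\bigsqcup_n E_n$; countable additivity of each scalar $\mu^x_\Lambda$ gives $\Lambda(\nu^x(E))=\sum_n\Lambda(\nu^x(E_n))$ for every $\Lambda\in X^\ast$, i.e.\ $\nu^x$ is weakly countably additive, and the Orlicz--Pettis theorem upgrades this to norm countable additivity.

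The main obstacle is the simple-to-$L^\infty$ step in $(1) \Rightarrow (2)$: one must extract a uniform bound on $|\mu^x_\Lambda|(\sigma(x))$ from the continuity of the Continuous Functional Calculus so that the semi-variation of $\nu^x$ is finite, and then exploit the norm-closedness of $J(X)$ to keep the limit of the $J(y_{s_n})$ inside $J(X)$. The other two implications are essentially definition-chasing once the weak-$\ast$ duality $X \leftrightarrow (X^\ast)^\ast_{w^\ast}$ and the Orlicz--Pettis theorem are available.
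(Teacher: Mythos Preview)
The paper states this proposition without proof (it is presumably imported from the companion paper on projection families cited as \cite{ProjectionFamilies}), so there is no argument in the present text to compare against. Your cyclic scheme $(1)\Rightarrow(2)\Rightarrow(3)\Rightarrow(1)$ is correct and is the standard way to prove such equivalences; each implication is handled by the right tool (norm-closedness of $J(X)$, the identification of weak-$\ast$ continuous functionals on $X^{\ast}$ with evaluations, and Orlicz--Pettis, respectively).

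One small remark on $(1)\Rightarrow(2)$: under hypothesis (1) you already have a genuine countably additive vector measure $\nu^{x}\colon\Sigma\to X$, and such a measure automatically has finite semivariation; this directly gives the uniform bound $\sup_{\|\Lambda\|\le 1}|\mu^{x}_{\Lambda}|(\sigma(x))<\infty$ you need to make $(y_{s_n})$ Cauchy. Invoking the continuity of the Continuous Functional Calculus at that point is a detour, since the estimate proved in the paper is of the form $|\Lambda(f(x))|\le M_{\Lambda}|f|_{\infty}$ with a $\Lambda$-dependent constant, and extracting uniformity from that would require an extra Uniform Boundedness argument. Going straight through the semivariation of $\nu^{x}$ is cleaner and keeps the implication self-contained.
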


\subsection{Spectral Theorem for Operators in Banach Spaces}

The Spectral Theorem for Banach algebras can be directly applied to the Banach algebra $B(X)$. If $T\in B(X)$ then $T$ is regular if $R_{T}\colon \rho(T) \to B(X)$ is Dunford integrable, which may be too restrictive for most cases. For this reason, we modify the previous construction to improve the Spectral Theorem in this setting.

\begin{definition}
Let $X$ be a Banach space and $T\in B(X)$. We will say that $T$ is \textbf{pointwise regular} if for each $x\in X$ the function $R_{T}^{x}\colon \rho(T) \to X$ given by
\begin{equation*}
    R_{T}^{x}(\lambda) = R_{T}(\lambda)(x)
\end{equation*}
is Dunford integrable.
\end{definition}

Note that the function $R_{T}^{x}$ is always analytic in its domain and singular in $\sigma(T)$. Also note that if $T$ is regular then it is pointwise regular. When we developed the Smooth and Continuous Functional Calculi we only used the fact that $X$ was a Banach algebra to define $R_{x}$ and the fact that it is analytic in its domain. Therefore, we can define the Smooth and Continuous Functional Calculi for pointwise regular operators.

\begin{definition}[Smooth and Continuous Functional Calculi]
Let $X$ be a Banach space and $T\in B(X)$ pointwise regular.
\begin{enumerate}
    \item The \textbf{Smooth Functional Calculus} is the map $C^{\infty}(T) \to L(X,X^{\ast\ast})$ given by
    \begin{align*}
        \Lambda(f(T)(x)) &= \frac{1}{2\pi i}\int_{\gamma}f\;\Lambda\circ R_{T}^{x}\;d\gamma - \frac{1}{2\pi}\int_{int\;\gamma\setminus \sigma(x)} f\;\Lambda\circ R_{T}^{x}\;d\ell\\
        &= \lim_{n\to\infty}\frac{1}{2\pi i}\int_{\gamma_{n}}f\;\Lambda\circ R_{T}^{x}\;d\gamma_{n}.
    \end{align*}

    \item The \textbf{Continuous Functional Calculus} is the map $C(\sigma(x)) \to L(X,X^{\ast\ast})$ given by
    \begin{align*}
        \Lambda(f(T)(x)) &= \lim_{n\to\infty} \Lambda(f_{n}(x)),
    \end{align*}
    where $(f_{n})_{n\in\mathbb{N}}$ is any sequence of smooth functions with a neighborhood of $\sigma(x)$ as their common domain such that $f_{n}\xrightarrow[]{unif} f$.
\end{enumerate}
\end{definition}

The proof that the images belong to $X^{\ast\ast}$ is formally identical to the corresponding result for Banach algebras, replacing $R_{x}$ by $R_{T}^{x}$. Naturally, one must first define the Smooth Functional Calculus and then define the Continuous Functional Calculus. Once again, the proofs are identical. The only difficulty not present in the previous setting is continuity with respect to $x$.

\begin{theorem}
If $f\in C(\sigma(T))$ then $f(T)\in B(X,X^{\ast\ast})$.
\end{theorem}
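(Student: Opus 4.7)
The plan is to show linearity of $x \mapsto f(T)(x)$, which is immediate from the linearity of $R_T^x$ in $x$, and then obtain the boundedness estimate $|f(T)(x)|_{X^{**}} \leq C|x|$ through nested applications of the Uniform Boundedness Principle (UBP). The strategy is first to handle smooth $f$ by approximating the functional calculus by the curve-level integrals of proposition \ref{CFC}, then to lift the result to continuous $f$ via uniform approximation by smooth functions.

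I would first treat the smooth case. Fix $f\in C^\infty(T)$; by proposition \ref{CFC} in its operator form,
\[ \Lambda(f(T)(x)) = \lim_{n\to\infty}\frac{1}{2\pi i}\int_{\gamma_n} f\,(\Lambda\circ R_T^x)\,d\gamma_n. \]
I would then define auxiliary operators $G_n : X \to X^{**}$ by $\Lambda(G_n(x)) = \frac{1}{2\pi i}\int_{\gamma_n} f\,(\Lambda\circ R_T^x)\,d\gamma_n$. Each $G_n$ is linear in $x$, and because $\gamma_n$ is compact and disjoint from $\sigma(T)$, $|R_T(\lambda)|$ is bounded on $\gamma_n$; consequently $|\Lambda(G_n(x))| \leq C_n |\Lambda||x|$ for a constant $C_n$ depending on $n$ but not on $\Lambda$ or $x$, so $G_n \in B(X, X^{**})$.

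The crucial step is to upgrade this $n$-dependent bound to a uniform one. For each fixed $x$, convergence of $\Lambda(G_n(x))$ in $n$ for every $\Lambda \in X^*$ implies $\sup_n|\Lambda(G_n(x))| < \infty$; viewing $\{G_n(x)\}_n$ as a family of bounded functionals on the Banach space $X^*$ and applying UBP yields $\sup_n |G_n(x)|_{X^{**}} < \infty$ for each $x$. A second application of UBP, now to $\{G_n\}_n \subset B(X, X^{**})$, produces $C := \sup_n|G_n|_{B(X,X^{**})} < \infty$. Weak-$*$ lower semicontinuity of the norm in $X^{**}$ then gives $|f(T)(x)|_{X^{**}} \leq \liminf_n|G_n(x)|_{X^{**}} \leq C|x|$, so $f(T) \in B(X, X^{**})$ for smooth $f$.

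The extension to $f\in C(\sigma(T))$ proceeds by the same double-UBP mechanism applied to a sequence of smooth approximations $f_m \xrightarrow{unif} f$: the pointwise weak-$*$ convergence $f_m(T)(x) \to f(T)(x)$ established in theorem \ref{TeoCFC} feeds UBP once to bound $\sup_m|f_m(T)(x)|_{X^{**}}$ for each $x$, and a second application bounds $\sup_m|f_m(T)|_{B(X,X^{**})}$, after which weak-$*$ lower semicontinuity finishes the argument. The main obstacle is conceptual rather than computational: pointwise regularity provides Dunford integrability of $R_T^x$ for each fixed $x$ but no a priori norm-control of $\lambda \mapsto R_T(\lambda)$ as an operator-valued function near $\sigma(T)$, so no explicit pointwise estimate $|f(T)(x)| \leq C|x|$ can be written down directly from the integral formula; UBP is precisely the tool that converts pointwise existence of the Dunford integrals into genuine operator-norm boundedness.
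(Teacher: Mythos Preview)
Your proof is correct and shares the paper's overall architecture: reduce to the curve-level operators $G_n$, establish their boundedness, and then pass to the pointwise limits (first in $n$, then along a smooth approximating sequence $f_m$) using the Uniform Boundedness Principle. The principal difference lies in how the boundedness of each $G_n$ is obtained. You invoke the direct estimate $\sup_{\lambda\in\gamma_n}|R_T(\lambda)|<\infty$, valid because the resolvent is norm-continuous on $\rho(T)$ and $\gamma_n$ is compact, which immediately yields $|\Lambda(G_n(x))|\leq C_n|\Lambda|\,|x|$. The paper instead proves that $x\mapsto \Lambda\circ R_T^x$ is continuous from $X$ into $C(\gamma_n)$ via the Closed Graph Theorem and deduces continuity of the integrals from that. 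Your route is more elementary and transparent here; the Closed Graph argument, while correct, is heavier machinery for the same conclusion. Conversely, the paper leaves the two UBP steps implicit (phrasing them simply as ``pointwise limit of continuous maps''), whereas you spell out the double UBP and the weak-$*$ lower semicontinuity explicitly, which makes the logical dependencies clearer.
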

\begin{proof}
Thinking of $f(T)$ as a bilinear map, it suffices to prove continuity on $x$ for fixed $\Lambda\in X^{\ast}$. The application $x \longmapsto \Lambda(f(T)(x))$ with $f\in C(\Omega)$ is the pointwise limit of $x \longmapsto \Lambda(f_{n}(T)(x))$ with $f_{n}\in C^{\infty}(\Omega)$, hence it is enough to show continuity when $f\in C^{\infty}(\Omega)$. Once again, $x \longmapsto \Lambda(f(T)(x))$ is the pointwise limit of the maps $x \longmapsto \frac{1}{2\pi}\int_{\gamma_{n}} f\Lambda\circ R_{T}^{x}\;d\gamma_{n}$ and it suffices to prove the continuity of these maps. To this end, we first show that the function $x\longmapsto \Lambda\circ R_{T}^{x}$ is continuous from $X$ to $C(\gamma_{n})$ for every $n\in\mathbb{N}$ by appealing to the Closed Graph Theorem.

Let $(x_{m})_{m\in\mathbb{N}}$ be a sequence in $X$ such that $x_{m} \to x$ and $\Lambda\circ R_{T}^{x_{n}} \xrightarrow[]{unif} h$ in $\gamma_{n}$. Given $\lambda\in\rho(T)$, we have that $R_{T}^{x_{m}}(\lambda) = R_{T}(\lambda)(x)$ and $R_{T}(\lambda)\in B(X)$, hence $R_{T}(\lambda)(x_{m}) \to R_{T}(\lambda)(x)$ and $R_{T}^{x_{m}} \xrightarrow[]{pw} R_{T}^{x}$ in $\gamma_{n}$. It follows that $\Lambda\circ R_{T}^{x_{m}} \xrightarrow[]{pw} \Lambda\circ R_{T}^{x}$ in $\gamma_{n}$. Since $\Lambda\circ R_{T}^{x_{m}} \xrightarrow[]{unif} h$ in $\gamma_{n}$ we have that $h = \Lambda\circ R_{T}^{x}$. The Closed Graph Theorem implies the continuity of this map.

With this, if $x_{m} \to x$ then $\Lambda\circ R_{T}^{x_{m}} \xrightarrow[]{unif} \Lambda\circ R_{T}^{x}$ in $\gamma_{n}$, hence $f\Lambda\circ R_{T}^{x_{m}} \xrightarrow[]{unif} f\Lambda\circ R_{T}^{x}$ in $\gamma_{n}$, thus
\begin{equation*}
    \int_{\gamma_{n}} f\Lambda\circ R_{T}^{x_{m}}\;d\gamma_{n} \to \int_{\gamma_{n}} f\Lambda\circ R_{T}^{x_{m}}\;d\gamma_{n}.
\end{equation*}
This establishes the continuity of the maps $x \longmapsto \frac{1}{2\pi}\int_{\gamma_{n}} f\Lambda\circ R_{T}^{x}\;d\gamma_{n}$ for every $n\in\mathbb{N}$ and fixed $\Lambda\in X^{\ast}$ and concludes the proof.
\end{proof}

The continuity of the Continuous Functional Calculus is now verified pointwise, that is, for each $\Lambda\in X^{\ast}$ and $x\in X$ the functional
\begin{equation}
\begin{array}{ccc}
     C(\sigma(T))& \longrightarrow &\mathbb{C}\\
     f& \longmapsto &\Lambda(f(T)(x))
\end{array}
\end{equation}
is continuous. In the case $f(T)\in B(X)$, this is just continuity in the weak operator topology. The proof is once again identical to the one for Banach algebras. Even though this property is weaker than the one for Banach algebras, it suffices to establish the Spectral Theorem.

\begin{theorem}[Spectral Theorem for Operators in Banach Spaces]\label{TeoEspectralOEB}
Let $X$ be a Banach space, and $T\in B(X)$ a pointwise regular operator. There exists a unique operator projection family $\mu^{T}$ in $\sigma(T)$ such that
\begin{equation*}
    f(T) = \int_{\sigma(T)}f\;d\mu^{T}
\end{equation*}
for each $f\in C(\sigma(T))$. In particular,
\begin{align*}
    T &= \int_{\sigma(T)}Id\;d\mu^{T}\\
    &= \int_{\sigma(T)}\lambda \;d\mu^{T}.
\end{align*}
\end{theorem}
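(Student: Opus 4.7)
The plan is to mirror the construction of the spectral family used for Banach algebras in Theorem~\ref{TeoEspectralAB}, adjusted to the bilinear setting: the object to be represented is now the two-variable map $(\Lambda,x)\mapsto\Lambda(f(T)(x))$ rather than the single-variable $\Lambda\mapsto\Lambda(f(x))$. For each fixed pair $(\Lambda,x)\in X^{\ast}\times X$, the previous theorem shows that $f(T)\in B(X,X^{\ast\ast})$, so $f\mapsto\Lambda(f(T)(x))$ is a continuous linear functional on $(C(\sigma(T)),|\cdot|_{\infty})$. The Riesz--Markov--Kakutani Theorem then yields a unique regular complex Borel measure $\mu^{T}_{\Lambda,x}$ on $\sigma(T)$ with
\begin{equation*}
    \Lambda(f(T)(x)) = \int_{\sigma(T)} f\;d\mu^{T}_{\Lambda,x}
\end{equation*}
for every $f\in C(\sigma(T))$. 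This immediately delivers the integral formula once the candidate family $\mu^{T}=\{\mu^{T}_{\Lambda,x}\}_{\Lambda,x}$ has been shown to be an operator projection family.

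Bilinearity is inherited from the bilinearity of $(\Lambda,x)\mapsto\Lambda(f(T)(x))$: for any scalars and fixed continuous $f$ the identity
\begin{equation*}
    \int f\;d\mu^{T}_{\alpha\Lambda+\beta\Phi,\,x} = \alpha\int f\;d\mu^{T}_{\Lambda,x} + \beta\int f\;d\mu^{T}_{\Phi,x}
\end{equation*}
holds, and Riesz--Markov uniqueness forces $\mu^{T}_{\alpha\Lambda+\beta\Phi,x}=\alpha\mu^{T}_{\Lambda,x}+\beta\mu^{T}_{\Phi,x}$, with the symmetric argument handling the second slot. For the net-continuity condition, if $\Lambda_{i}\to\Lambda$ then the pointwise continuity statement preceding the theorem gives $\int f\,d\mu^{T}_{\Lambda_{i},x}\to\int f\,d\mu^{T}_{\Lambda,x}$ for every $f\in C(\sigma(T))$, which is only the weak-$\ast$ convergence of the scalar measures. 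To upgrade this to the setwise convergence demanded by the definition, I would follow the strategy of Theorem~\ref{TeoBicontNorm}: the estimate $|\Lambda(f(T)(x))|\leq M(\Lambda,x)|f|_{\infty}$ extracted from the proof of Theorem~\ref{TeoCFC} persists when $f$ is replaced by any bounded Borel function through the $L^{1}(\mu^{T})$-integration theory of \cite{ProjectionFamilies}; in particular the map $\Lambda_{i}\mapsto\mu^{T}_{\Lambda_{i},x}(E)=\Lambda_{i}(\chi_{E}(T)(x))$ inherits continuity from the extended functional calculus, giving the required setwise limit. The argument for $x_{j}\to x$ is identical, using instead the pointwise continuity in $x$ of $f(T)(x)$ that was established in the theorem just above.

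Uniqueness is immediate from Riesz--Markov: any other operator projection family representing $f(T)$ by the same integral formula would produce the same scalar measure for each $(\Lambda,x)$. The step I expect to be the main obstacle is Step~3, namely upgrading the weak-$\ast$ convergence of the scalar measures to genuine setwise convergence. Verifying that the bound from Theorem~\ref{TeoCFC} indeed transfers to indicator functions through the Borel extension of the continuous functional calculus, and that this transfer is jointly compatible with nets in both $\Lambda$ and $x$, is where the substantive analytic work lies; the remainder is a formal parallel to the Banach algebra case of Theorem~\ref{TeoEspectralAB}.
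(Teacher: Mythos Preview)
Your overall architecture matches the paper's exactly: apply Riesz--Markov--Kakutani to the functional $f\mapsto\Lambda(f(T)(x))$ for each fixed $(\Lambda,x)$, check bilinearity by uniqueness, and then verify the projection-family continuity condition. The paper's remarks after the theorem say precisely this, deferring all details to the parallel Banach-algebra arguments.

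The gap is in your Step~3. You propose to obtain setwise continuity by writing $\mu^{T}_{\Lambda_i,x}(E)=\Lambda_i(\chi_E(T)(x))$ and invoking the $L^{1}(\mu^{T})$-integration theory of \cite{ProjectionFamilies}. This is circular: in the paper's development the operator $\chi_E(T)\in B(X,X^{\ast\ast})$ is produced by the Borel Functional Calculus, which is \emph{defined} only after $\mu^{T}$ has been shown to be an operator projection family (see the definition immediately following Theorem~\ref{TeoEspectralOEB}). Likewise, the integration theory for $L^{1}(\mu^{T})$ presupposes that $\mu^{T}$ is already a projection family. So you cannot appeal to either object while still trying to establish the projection-family axioms.

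The paper's route in Theorem~\ref{TeoBicontNorm} avoids this circularity by working directly at the level of the scalar measures. For continuous $g$ one has $\int g\,d\mu^{T}_{\Lambda,x}=\Lambda(g(T)(x))$, and the right-hand side is bounded linear in $\Lambda$ simply because $g(T)(x)\in X^{\ast\ast}$. The passage to characteristic functions then goes through the total-variation bound $|\mu^{T}_{\Lambda,x}(E)|\le|\mu^{T}_{\Lambda,x}|(\sigma(T))=\sup_{|g|_\infty\le1}|\Lambda(g(T)(x))|$; the family $\{g(T)(x):|g|_\infty\le1\}\subset X^{\ast\ast}$ is pointwise bounded on $X^{\ast}$, hence norm-bounded by the Uniform Boundedness Principle, which gives $|\mu^{T}_{\Lambda,x}|(\sigma(T))\le C(x)|\Lambda|$ and therefore setwise continuity in $\Lambda$. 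The argument in $x$ is symmetric. Replace your appeal to $\chi_E(T)$ by this direct estimate and your proof goes through.
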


The existence of the measures follows from the continuity of the application
\begin{equation*}
    (f,\Lambda,x) \longmapsto \Lambda(f(T)(x))
\end{equation*}
in the first variable. The fact $\mu^{T}$ is an operator projection family follows from the continuity of the last two variables. At this moment, we trust that it is clear that all of our results follow from the linearity and separate continuity of the previous function, the proofs for which we have already provided in a slightly different setting.

\begin{definition}
Let $X$ be a Banach and $T\in B(X)$ pointwise regular. We define the \textbf{Borel Functional Calculus} as the application that to each $f\in L^{1}(\mu^{T})$ associates the operator
\begin{equation*}
    f(T) = \int_{\sigma(T)} f\;d\mu^{T}
\end{equation*}
in $B(X,X^{\ast\ast})$.
\end{definition}

Once again, if $X$ is reflexive, then $f(T)$ is actually an operator in $B(X)$.

\begin{proposition}
Let $T\in B(X)$ be a pointwise regular operator and $\mu^{T}$ its spectral family. The following conditions are equivalent:
\begin{enumerate}
    \item The family $\mu^{T}$ is induced by an operator-valued measure $\nu^{T}$, in the sense that $\mu^{T}_{\Lambda,x}(E) = \Lambda(\nu^{T}(E)(x))$ for each $\Lambda\in X^{\ast}$ and $x\in X$.

    \item Each function in $L^{\infty}(\mu^{T})$ is properly integrable.

    \item For each measurable set $E$ the function $(\Lambda,x) \longmapsto \mu^{T}_{\Lambda,x}(E)$ is continuous with respect to $\tau_{\omega^{\ast}}$ in the first variable and with respect to $\tau_{\omega}$ in the second variable.
\end{enumerate}
\end{proposition}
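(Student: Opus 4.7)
I would prove the three implications in the cycle $(1)\Rightarrow(2)\Rightarrow(3)\Rightarrow(1)$, paralleling the vector-valued argument for the Banach algebra case but tracking the extra variable $x$ throughout.

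For $(1)\Rightarrow(2)$, assume $\mu^{T}_{\Lambda,x}(E)=\Lambda(\nu^{T}(E)(x))$. On a simple function $f=\sum c_{i}\chi_{E_{i}}$ the operator $T_{f}=\sum c_{i}\,\nu^{T}(E_{i})\in B(X)$ realizes the integral, since $\int f\,d\mu^{T}_{\Lambda,x}=\Lambda(T_{f}(x))$. For a general $f\in L^{\infty}(\mu^{T})$, approximate uniformly by simple $f_{n}$; the uniform boundedness of the scalar measures $\Lambda\circ\nu^{T}(\cdot)(x)$ on the unit balls (via the Uniform Boundedness Principle) yields bounded semivariation of $\nu^{T}$, so $(T_{f_{n}})$ is Cauchy in $B(X)$. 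The limit $T_{f}\in B(X)$ satisfies $\int f\,d\mu^{T}=J\circ T_{f}$, so $f$ is properly integrable.

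For $(2)\Rightarrow(3)$, given a measurable set $E$, proper integrability of $\chi_{E}$ produces $\nu^{T}(E)\in B(X)$ with $\mu^{T}_{\Lambda,x}(E)=\Lambda(\nu^{T}(E)(x))$. For fixed $x$, the map $\Lambda\mapsto \Lambda(\nu^{T}(E)(x))$ is evaluation at an element of $X$, hence $\tau_{\omega^{\ast}}$-continuous on $X^{\ast}$; for fixed $\Lambda$, the map $x\mapsto (\Lambda\circ\nu^{T}(E))(x)$ is a norm-continuous linear functional, hence $\tau_{\omega}$-continuous.

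The main step is $(3)\Rightarrow(1)$. Fix $E$. Bilinearity of $(\Lambda,x)\mapsto \mu^{T}_{\Lambda,x}(E)$ is built into the definition of operator projection family. For fixed $x$ the map $\Lambda\mapsto \mu^{T}_{\Lambda,x}(E)$ is a $\tau_{\omega^{\ast}}$-continuous linear functional on $X^{\ast}$; since $(X^{\ast},\tau_{\omega^{\ast}})^{\ast}=X$, there is a unique $\nu^{T}(E)(x)\in X$ with $\mu^{T}_{\Lambda,x}(E)=\Lambda(\nu^{T}(E)(x))$, and bilinearity forces $\nu^{T}(E)\colon X\to X$ to be linear. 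For boundedness, note that for each $\Lambda$ the map $x\mapsto \Lambda(\nu^{T}(E)(x))$ is $\tau_{\omega}$-continuous, hence lies in $X^{\ast}$; then the Closed Graph Theorem applies, since if $x_{n}\to x$ and $\nu^{T}(E)x_{n}\to y$ in norm, every $\Lambda\in X^{\ast}$ yields $\Lambda(y)=\lim \Lambda(\nu^{T}(E)x_{n})=\lim \mu^{T}_{\Lambda,x_{n}}(E)=\mu^{T}_{\Lambda,x}(E)=\Lambda(\nu^{T}(E)x)$, whence $y=\nu^{T}(E)x$. Countable additivity of $\nu^{T}$ in the weak operator sense is inherited pointwise from the countable additivity of each scalar measure $\mu^{T}_{\Lambda,x}$, which is exactly what the identity $\mu^{T}_{\Lambda,x}(E)=\Lambda(\nu^{T}(E)(x))$ requires.

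The main obstacle is packaging the separate $\tau_{\omega^{\ast}}$- and $\tau_{\omega}$-continuity hypotheses into a genuinely bounded operator in $B(X)$ rather than a merely pointwise-defined linear map. The crucial observation is that $\tau_{\omega}$-continuity on $X$ coincides with membership in $X^{\ast}$, which lets the Closed Graph Theorem upgrade the pointwise structure to norm boundedness; the symmetric use of Goldstine-type duality $(X^{\ast},\tau_{\omega^{\ast}})^{\ast}=X$ is what distinguishes this operator version from the Banach algebra version, where only the continuity in $\Lambda$ was needed.
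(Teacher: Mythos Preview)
The paper states this proposition without proof; it is one of two parallel equivalence statements (the other for vector projection families in Banach algebras) that are simply recorded, presumably as direct consequences of the general theory developed in the companion paper on projection families. There is therefore no argument in the present paper to compare against.

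Your cycle $(1)\Rightarrow(2)\Rightarrow(3)\Rightarrow(1)$ is the natural one and the individual steps are sound. The heart of the matter, as you correctly identify, is $(3)\Rightarrow(1)$: the fact that $(X^{\ast},\tau_{\omega^{\ast}})^{\ast}=J(X)$ converts $\tau_{\omega^{\ast}}$-continuity in $\Lambda$ into a genuine element $\nu^{T}(E)(x)\in X$, and the Closed Graph Theorem then upgrades the resulting linear map to a bounded operator. Two minor points worth tightening: in $(1)\Rightarrow(2)$ your appeal to bounded semivariation is really the statement that $\sup\{|\sum c_{i}\,\mu^{T}_{\Lambda,x}(E_{i})|:|c_{i}|\le 1,\ (E_{i})\ \text{partition}\}$ is uniformly bounded over $\|\Lambda\|,\|x\|\le 1$, which follows from Uniform Boundedness applied twice (once in $\Lambda$, once in $x$); and in $(3)\Rightarrow(1)$ the countable additivity you obtain for $\nu^{T}$ is only in the weak operator topology, so you should check that this matches whatever definition of ``operator-valued measure'' is in force in the companion paper---if strong or norm additivity is required, an Orlicz--Pettis type argument would be needed to upgrade.
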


Lastly, we show that our second version of the Spectral Theorem is an extension of the results for Hilbert spaces.

\begin{proposition}
Let $H$ be a Hilbert space and $T\in B(H)$. If $T$ is normal, then $T$ is pointwise regular.
\end{proposition}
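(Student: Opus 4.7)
The plan is to invoke the classical Spectral Theorem for normal operators on a Hilbert space, which furnishes a projection-valued measure $E$ on the Borel subsets of $\sigma(T)$ such that
\begin{equation*}
    T = \int_{\sigma(T)} s \, dE(s), \qquad R_{T}(\lambda) = \int_{\sigma(T)} \frac{1}{\lambda - s}\, dE(s) \text{ for } \lambda \in \rho(T).
\end{equation*}
Fix $x \in H$ and let $\Omega$ be a precompact neighborhood of $\sigma(T)$. I want to show that $R_{T}^{x}$ is Dunford integrable over $\Omega$, where $R_{T}^{x}$ is defined on $\Omega \setminus \sigma(T)$ and extended (say by zero) to $\Omega$; since $R_{T}$ is analytic on $\rho(T)$, $R_{T}^{x}$ is weakly measurable, so by Dunford's Lemma it suffices to establish scalar integrability.

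By the Riesz representation theorem, every $\Lambda \in H^{\ast}$ has the form $\Lambda(\cdot) = \langle \cdot, y \rangle$ for some $y \in H$. The spectral theorem yields the complex Borel measure
\begin{equation*}
    \mu_{x,y}(A) = \langle E(A)x, y \rangle,
\end{equation*}
whose total variation satisfies $|\mu_{x,y}|(\sigma(T)) \leq \|x\|\|y\|$ (a standard consequence of the polarization of the positive measure $\langle E(\cdot)x, x\rangle$ and Cauchy-Schwarz). For $\lambda \in \rho(T)$ we then have
\begin{equation*}
    \Lambda(R_{T}^{x}(\lambda)) = \langle R_{T}(\lambda)x, y\rangle = \int_{\sigma(T)}\frac{1}{\lambda - s}\, d\mu_{x,y}(s).
\end{equation*}

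The core estimate is obtained by Tonelli's theorem applied to the non-negative integrand $\frac{1}{|\lambda - s|}$:
\begin{equation*}
    \int_{\Omega}|\Lambda \circ R_{T}^{x}|\, d\ell \; \leq\; \int_{\Omega}\int_{\sigma(T)}\frac{1}{|\lambda - s|}\, d|\mu_{x,y}|(s)\, d\ell(\lambda) \;=\; \int_{\sigma(T)}\left(\int_{\Omega}\frac{1}{|\lambda - s|}\, d\ell(\lambda)\right)d|\mu_{x,y}|(s).
\end{equation*}
Since $\Omega$ is bounded, say contained in a disk of radius $R$, polar coordinates around $s$ give $\int_{\Omega}\frac{1}{|\lambda - s|}\, d\ell(\lambda) \leq \int_{B_{2R}(s)}\frac{1}{|\lambda - s|}\, d\ell(\lambda) = 4\pi R$, uniformly in $s$. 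Therefore
\begin{equation*}
    \int_{\Omega}|\Lambda \circ R_{T}^{x}|\, d\ell \;\leq\; 4\pi R\, |\mu_{x,y}|(\sigma(T)) \;\leq\; 4\pi R\, \|x\|\|y\|\, < \,\infty,
\end{equation*}
so $\Lambda \circ R_{T}^{x} \in L^{1}(\Omega)$ for every $\Lambda \in H^{\ast}$, establishing scalar integrability and hence Dunford integrability.

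The conceptual core of the argument is the passage from a norm bound on $R_{T}(\lambda)$ (which, as noted in the text, gives $\|R_{T}(\lambda)\| = d(\lambda, \sigma(T))^{-1}$ and hence only controls integrability when $\ell(\sigma(T)) = 0$) to a scalar bound mediated by the spectral measure; this is what allows one to handle normal operators whose spectrum carries positive planar Lebesgue measure. The main obstacle is precisely this point: the naive pointwise estimate via the operator norm is too weak, so one must exploit the projection-valued measure to replace an integral of $1/d(\lambda,\sigma(T))$ (possibly divergent) by an iterated integral of $1/|\lambda - s|$ against a finite total-variation measure on $\sigma(T)$, which is always finite by Fubini and the local integrability of $1/|z|$ in $\mathbb{R}^{2}$.
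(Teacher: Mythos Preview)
Your proof is correct and follows essentially the same route as the paper's: invoke the classical spectral theorem for normal operators, write $\Lambda(R_{T}^{x}(\lambda))$ as an integral of $(\lambda-s)^{-1}$ against the scalar spectral measure $\mu_{x,y}=E^{T}_{x,y}$, swap the order of integration by Tonelli, and bound the inner planar integral of $1/|\lambda-s|$ uniformly in $s$ using the local integrability of $1/|z|$ in $\mathbb{R}^{2}$. Your write-up is in fact more explicit than the paper's (you spell out the Riesz identification of $H^{\ast}$, the uniform bound $4\pi R$, and the total-variation estimate $|\mu_{x,y}|(\sigma(T))\le\|x\|\|y\|$), and your closing paragraph correctly identifies why the naive operator-norm bound $\|R_{T}(\lambda)\|=d(\lambda,\sigma(T))^{-1}$ would be insufficient when $\ell(\sigma(T))>0$.
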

\begin{proof}
The Spectral Theorem for operators in Hilbert spaces implies that there exists a unique resolution of the identity $E^{T}$ such that
\begin{equation*}
    \langle y,f(T)(x)\rangle = \int_{\sigma(T)}f\;dE^{T}_{y,x}
\end{equation*}
for each $f\in C(\sigma(T))$. It follows that
\begin{align*}
    \int_{\Omega\setminus\sigma(T)}|\langle y,R_{T}(\lambda)(x)\rangle|\;d\lambda &= \int_{\Omega\setminus\sigma(T)}\left|\int_{\sigma(T)}\frac{1}{z-\lambda}\;dE^{T}_{y,x}(z)\right|\;d\lambda\\
    &\leq \int_{\sigma(T)}\int_{\Omega\setminus\lambda}\frac{1}{|z-\lambda|}\;d\lambda\;d|E^{T}_{y,x}|(z)\\
    &= C |E^{T}_{y,x}|(\sigma(T)),
\end{align*}
proving the integral is finite. It follows that $R_{T}^{x}$ is Dunford integrable for each $x\in H$ and $T$; hence, pointwise regular.
\end{proof}

\section*{Acknowledgements}
This work was supported by DGAPA-UNAM, grant No. IN108225. \\

\noindent \textbf{Data Availability} 
Data sharing is not applicable to this article as no new data were created or analyzed in this study.
\\

\noindent
\textbf{Declarations}\\

\noindent
\textbf{Conflict of interest} The authors declare that they have no conflict of interest.

\nocite{*}
\bibliographystyle{alpha}
\bibliography{main}
\addcontentsline{toc}{section}{Bibliography}

\end{document}